\newtheorem{theorem}{Theorem}
\newtheorem{proposition}[theorem]{Proposition}
\newtheorem{corollary}[theorem]{Corollary}
\newtheorem{lemma}[theorem]{Lemma}
\theoremstyle{remark}
\newtheorem{example}[theorem]{Example}
\newtheorem{remark}[theorem]{Remark}
\title[Uniquely presented monoids]{Uniquely presented finitely generated commutative monoids}
\author{P. A. Garc\'{\i}a-S\'anchez}
\address{Universidad de Granada,
Departamento de \'Algebra}
\email{pedro@ugr.es}
\author{I. Ojeda}
\address{Universidad de Extremadura,
Departamento de Matem\'{a}ticas}
\email{ojedamc@unex.es}
\thanks{The first author is supported by the project MTM2007-62346 and FEDER funds. The second author is partially supported by the project MTM2007-64704, National Plan I+D+I}
\subjclass{20M14 (Primary) 20M05 (Secondary).}
\keywords{Commutative monoid; affine semigroup; numerical semigroup; congruences; minimal presentation; Betti numbers; indispensability; gluing of semigroups}
\begin{document}

\date{\today}
\maketitle

\begin{abstract}
A finitely generated commutative monoid is uniquely presented if it has a unique minimal presentation. We give necessary and sufficient conditions for finitely generated, combinatorially finite, cancellative, commutative monoids to be uniquely presented. We use the concept of gluing to construct commutative monoids with this property. Finally for some relevant families of numerical semigroups we describe the elements that are uniquely presented.
\end{abstract}

\section*{Introduction}

R\'edei proves in \cite{Redei} that every finitely generated commutative monoid is finitely presented. Since then, its proof has been shortened drastically, and a great development has been made on the study and computation of minimal presentations of monoids, more specifically, of finitely generated subsemigroups of $\mathbb{N}^n,$ known usually as affine semigroups (see for instance \cite{Ros97} and \cite{BCMP} or \cite[Chapter 9]{RGS99} and the references therein). For affine semigroups the concepts of minimal presentations with respect to cardinality or set inclusion coincide, that is to say, any two minimal presentations have the same cardinality (this even occurs in a more general setting, see \cite{RGSU}).

The interest of the study of such kind of monoids and their presentations was partially motivated by their application in Commutative Algebra and Algebraic Geometry (see \cite[Chapter 6]{BH} and \cite{Fulton}).

Recently, new applications of affine semigroups have been found in the so-called Algebraic Statistic. It is precisely in this context, where the problem of deciding under which conditions such monoids have a unique minimal presentation has attracted the interests of a number of researchers. Roughly speaking, convenient algebraic techniques for the study of some statistical models seem to be more interesting for statisticians when certain semigroup associated to the model is uniquely presented (see \cite{Takemura}).

The efforts made to understand the problem of the uniqueness come from an algebraic setting and consists essentially in identifying particular minimal generators in a presentation as $R-$module of the semigroup algebra, where $R$ is a polynomial ring over a field (see \cite{Charalambous07, OjVi2}). So, whole families of uniquely presented monoids have not been determined (with the exception of some previously known cases, see \cite{Oj2}) and techniques for the construction of uniquely presented monoids has not been developed so far.

Here, we propose an approach to the problem of the uniqueness of the minimal presentations from a semigroup theoretic point of view. In a preliminary section, we recall the basic definitions and how minimal presentations of finitely generated, combinatorially finite, cancellative and commutative monoids (which includes affine semigroups) are obtained. Next, in Section \ref{Sect-Betti}, we focus on the elements of the monoid whose factorizations yield these presentations, which we call Betti elements. Section \ref{Sect-UPS} provides a necessary and sufficient condition a monoid must fulfill to be uniquely presented (Corollary \ref{Cor mUP2}). Some results in these sections may be also stated in combinatorial terms by using the simplicial complexes introduced by S. Eliahou in his unpublished PhD thesis (1983), see \cite{Charalambous07} and \cite{OjVi}.

In Section \ref{Sect Glue}, we make extensive use of the gluing of affine semigroups. The concept gluing of semigroups was defined by J.C. Rosales in \cite{Ros97} and was used later by different authors to characterize complete intersection affine semigroup rings. In that section, given a gluing $S$ of two affine semigroups $S_1$ and $S_2,$ we show that $S$ is uniquely presented if and only if $S_1$ and $S_2$ are uniquely presented and some extra natural condition on where $S_1$ and $S_2$ glue holds (Theorem \ref{Th Glue2}). In order to reach this result, we obtain Theorem \ref{Th Glue1} showing that the Betti elements of $S$ are the union of the Betti elements of $S_1,$ $S_2$ and the element in which $S_1$ and $S_2$ glue to produce $S.$ Both theorems may be considered as the main results in this manuscript. Furthermore, Theorem \ref{Th Glue2} may be used to systematically produce uniquely presented monoids as we show in Example \ref{Ejem Glue1}.

Finally, in the last section, we identify all uniquely presented monoids in some classical families of numerical semigroups (submonoids of $\mathbb{N}$ with finite complement in ${\mathbb N}$).

\section{Preliminaries}\label{Sect-pre}

In this section, we summarize some definitions, notations and results that will be useful later in the paper. We refer to the reader to \cite{RGS99} for further information.

Let $S$ denote a \emph{commutative monoid}, that is to say, a set with a binary operation that is associative, commutative and has an identity element which we will denote by $\mathbf{0}.$ Since $S$ is commutative, we will use additive notation. Assume that $S$ is \emph{cancellative} ($\mathbf{a}+\mathbf{b}=\mathbf{a}+\mathbf{c}$ in $S$ implies $\mathbf{b}=\mathbf{c}$). The monoids under study in this paper are also \emph{free of units} ($S\cap (-S)=\{\mathbf{0}\}$). Some authors call these monoids \emph{reduced} (see for instance \cite{RGS99}), others refer to this property as \emph{positivity} (\cite[Chapter 6]{BH}). Independently of the name we use to denote these monoids, the most important property they have, is that they are \emph{combinatorially finite}, that is to say, every element $\mathbf{a} \in S$ can be expressed only in finitely many ways as a sum $\mathbf{a} = \mathbf{a}_1 + \cdots + \mathbf{a}_q,$ with $\mathbf{a}_1,\ldots, \mathbf{a}_q \in S \setminus \{0\}$ (see \cite{BCMP}, or \cite{RGSU} for a wider class of monoids where this condition still holds true). Moreover, the binary relation on $S$ defined by $\mathbf{b} \prec_S \mathbf{a}$ if $\mathbf{a} - \mathbf{b} \in S$ is a well defined order on $S$ that satisfies the descending chain condition.

All monoids considered in this paper are finitely generated, commutative, cancellative and free of units, and thus we will omit these adjectives in what follows. Relevant examples of monoids fulfilling these conditions are \emph{affine semigroups}, that is monoids isomorphic to finitely generated submonoids of ${\mathbb N}^r$ with $r$ a positive integer (${\mathbb N}$ denotes here the set of nonnegative integers), and in particular, \emph{numerical semigroups} that are submonoids of the set of nonnegative integers with finite complement in ${\mathbb N}.$

We will write $S = \langle \mathbf{a}_1, \ldots, \mathbf{a}_r \rangle$ for the monoid generated by $\{\mathbf{a}_1, \ldots, \mathbf{a}_r\},$ that is to say, $S = \mathbf{a}_1 \mathbb{N} + \cdots + \mathbf{a}_r \mathbb{N}.$ In such a case, $\{\mathbf{a}_1, \ldots, \mathbf{a}_r\}$ will be said to be a \emph{system of generators} of $S.$ Moreover, if no proper subset of $\{\mathbf{a}_1, \ldots, \mathbf{a}_r\}$ generates $S,$ the set $\{\mathbf{a}_1, \ldots, \mathbf{a}_r\}$ is a \emph{minimal} system of generators of $S.$ In our context, every monoid has a unique minimal system of generators: define $S^*=S\setminus\{0\},$ then the minimal system of generators of $S$ is $S^*\setminus(S^*+S^*)$ (see \cite[Chapter 3]{RGS99}). In particular, if $S$ is the set of solutions of a system of linear Diophantine equations and/or inequalities, the minimal system of generators of $S$ coincides with the so called Hilbert Basis (see, e.g. \cite[Chapter 13]{Sturmfels95}).

Recall that if $S$ is a numerical semigroup minimally generated by $\{a_1< \cdots< a_r \} \subset \mathbb{N},$ the number $r$ is usually called \emph{embedding dimension} of $S,$ and the number $a_1$ is \emph{multiplicity}. It is easy to show (and well-known) that $\mathbf{a}_1 \geq r$ (see \cite[Proposition 2.10]{RGS09}). When $a_1 = r,$ $S$ is said to be of \emph{maximal embedding dimension}.

Given the minimal system of generators, $A = \{\mathbf{a}_1, \ldots, \mathbf{a}_r\},$ of a monoid $S,$ consider the monoid map $$\varphi_A : \mathbb{N}^r \longrightarrow S;\ \mathbf{u} = (u_1, \ldots, u_r) \longmapsto \sum_{i=1}^r u_i \mathbf{a}_i.$$ This map is sometimes known as the \emph{factorization homomorphism} associated to $S.$

Notice that each $\mathbf{u} = (u_1, \ldots, u_r) \in \varphi_A^{-1}(\mathbf{a})$ gives a \emph{factorization} of $\mathbf{a} \in S,$ say $\mathbf{a} = \sum_{i=1}^r u_i \mathbf{a}_i.$ Thus, $\# \varphi_A^{-1}(\mathbf{a})$ is the number of factorizations of $\mathbf{a} \in S.$ Observe that $\varphi_A^{-1}(\mathbf{a})$ is finite because of the combinatorial finiteness of $S$ (see also \cite[Lemma 9.1]{RGS99}).

Let $\sim_A$ be the kernel congruence of $\varphi_A,$ that is, $\mathbf{u}\sim_A \mathbf{v}$ if $\varphi_A(\mathbf{u})=\varphi_A(\mathbf{v})$ ($\sim_A$ is actually a congruence, an equivalence relation compatible with addition). It follows easily that $S$ is isomorphic to the monoid ${\mathbb N}^r/\sim_A.$

Given $\rho \subseteq {\mathbb N}^r\times {\mathbb N}^r,$ the congruence generated by $\rho$ is the least congruence containing $\rho.$ This congruence is the intersection of all congruences containing $\rho.$ If $\sim$ is the congruence generated by $\rho,$ then we say that $\rho$ is a \emph{system of generators}. R\'edei's theorem (see \cite{Redei}) precisely states that every congruence on ${\mathbb N}^r$ is finitely generated. A \emph{presentation} for $S$ is a system of generators of $\sim_A,$ and a \emph{minimal presentation} is a minimal system of generators of $\sim_A$ (in the sense that none of its proper subsets generates $\sim_A$). In our setting, all minimal presentations have the same cardinality (see for instance \cite{RGSU} or \cite{RGS99}). This is not the case for finitely generated monoids in general.

Next we briefly describe a procedure for finding all minimal presentations for $S$ as presented in \cite{RGSU} (in \cite[Chapter 9]{RGS99} this description is given in our context).

For $\mathbf{u} = (u_1, \ldots, u_r)$ and $\mathbf{v} = (v_1, \ldots, v_r) \in \mathbb{N}^r,$ we write $\mathbf{u} \cdot \mathbf{v}$ for $\sum_{i=1}^r u_i v_i$ (the dot product).

Given $\mathbf{a} \in S,$ we define the following binary relation on $\varphi_A^{-1}(\mathbf{a}).$ For $\mathbf{u}, \mathbf{u}' \in \varphi_A^{-1}(\mathbf{a}),$ $\mathbf{u} \mathcal{R} \mathbf{u}'$ if there exists a chain $\mathbf{u}_0, \ldots, \mathbf{u}_k \in \varphi_A^{-1}(\mathbf{a})$ such that
\begin{enumerate}[(a)]
\item $\mathbf{u}_0 = \mathbf{u},\ \mathbf{u}_k = \mathbf{u}',$
\item $\mathbf{u}_i \cdot \mathbf{u}_{i+1} \neq 0,\ i\in\{0, \ldots, k-1\}.$
\end{enumerate}
For every $\mathbf{a} \in S,$ define $\rho_\mathbf{a}$ in the following way.
\begin{itemize}
\item If $\varphi_A^{-1}(\mathbf{a})$ has one $\mathcal{R}-$class, then set $\rho_\mathbf{a}=\varnothing.$
\item Otherwise, let $\mathcal{R}_1,\ldots, \mathcal{R}_k$ be the different $\mathcal{R}-$classes of $\varphi_A^{-1}(\mathbf{a}).$ Choose $\mathbf{v}_i \in \mathcal{R}_i$ for all $i \in \{1,\ldots,k\}$ and set $\rho_\mathbf{a}$ to be any set of $k-1$ pairs of elements in $V=\{\mathbf{v}_1,\ldots,\mathbf{v}_k\}$ so that any two elements in $V$ are connected by a sequence of pairs in $\rho_\mathbf{a}$ (or their symmetrics). For instance, we can choose $\rho_\mathbf{a}=\{(\mathbf{v}_1, \mathbf{v}_2), \ldots, (\mathbf{v}_1, \mathbf{v}_k)\},$ or $\rho_\mathbf{a}=\{(\mathbf{v}_1, \mathbf{v}_2), (\mathbf{v}_2,\mathbf{v}_3),\ldots, (\mathbf{v}_{k-1}, \mathbf{v}_k)\}.$
\end{itemize}
Then $\rho=\bigcup_{\mathbf{a} \in S}\rho_\mathbf{a}$ is a minimal presentation of $S.$ Moreover, in this way one can
construct all minimal presentations for $S.$ Observe that there are finitely many elements $\mathbf{a}$ in $S$ for which $\varphi_A^{-1}(\mathbf{a})$ has more than one $\mathcal{R}-$class because $S$ is finitely presented.

\section{Betti elements}\label{Sect-Betti}

A minimal presentation of $S$ is as we have seen above a set of pairs of factorizations of some elements in $S,$ those having more than one ${\mathcal R}$-class. We say that $\mathbf{a}\in S$ is a \emph{Betti element} if $\varphi_A^{-1}(\mathbf{a})$ has more than one ${\mathcal R}$-class.

We will say the $\mathbf{a} \in S$ is \emph{Betti-minimal} if it is minimal among all the Betti elements in $S$ with respect to $\prec_S.$

Of course, Betti elements in $S$ are not necessarily Betti-minimal. Consider, for instance, $S = \langle 4, 6, 21 \rangle$ and $\mathbf{a} = 42.$

In the following, we will write $\mathrm{Betti}(S)$ and $\text{Betti-minimal}(S)$ for the sets of Betti elements and Betti minimal elements of the monoid $S,$ respectively.

\begin{lemma}\label{Lema 1}
Let $S = \langle \mathbf{a}_1, \ldots, \mathbf{a}_r \rangle.$ If $\mathbf{a} \not\in \mathrm{Betti}(S)$ and $\# \varphi_A^{-1}(\mathbf{a}) \geq 2,$ there exists $\mathbf{a}' \in \mathrm{Betti}(S)$ such that $\mathbf{a}' \prec_S \mathbf{a}.$
\end{lemma}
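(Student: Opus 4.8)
The plan is to argue by descent along the order $\prec_S$, exploiting the well-foundedness (descending chain condition) of $\prec_S$ guaranteed by combinatorial finiteness. Suppose $\mathbf{a} \notin \mathrm{Betti}(S)$ but $\#\varphi_A^{-1}(\mathbf{a}) \geq 2$. Since $\mathbf{a}$ is not a Betti element, $\varphi_A^{-1}(\mathbf{a})$ has a single $\mathcal{R}$-class; so any two factorizations $\mathbf{u}, \mathbf{u}'$ of $\mathbf{a}$ are linked by an $\mathcal{R}$-chain, and it suffices to treat two factorizations $\mathbf{u} \neq \mathbf{u}'$ with $\mathbf{u}\cdot\mathbf{u}' \neq 0$, i.e. sharing a common generator in their support. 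Write $\mathbf{u} = \mathbf{w} + \mathbf{u}_0$ and $\mathbf{u}' = \mathbf{w} + \mathbf{u}_0'$ where $\mathbf{w}\neq \mathbf 0$ collects the common part (pick any coordinate $i$ with $u_i, u_i' > 0$ and set, say, $\mathbf{w} = \min(\mathbf{u},\mathbf{u}')$ coordinatewise, which is nonzero). Then $\mathbf{u}_0$ and $\mathbf{u}_0'$ are two \emph{distinct} factorizations of the element $\mathbf{b} := \varphi_A(\mathbf{u}_0) = \varphi_A(\mathbf{u}_0')$, and $\mathbf{b} \prec_S \mathbf{a}$ since $\mathbf{a} - \mathbf{b} = \varphi_A(\mathbf{w}) \in S$; moreover $\mathbf{b}\neq \mathbf{a}$ because $\mathbf{w}\neq\mathbf 0$ and $S$ is free of units (so $\varphi_A(\mathbf{w})\neq \mathbf 0$).

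Now I would recurse. The element $\mathbf{b}$ has $\#\varphi_A^{-1}(\mathbf{b}) \geq 2$. Either $\mathbf{b} \in \mathrm{Betti}(S)$, and we are done with $\mathbf{a}' = \mathbf{b} \prec_S \mathbf{a}$; or $\mathbf{b}\notin\mathrm{Betti}(S)$, in which case we apply the same construction to $\mathbf{b}$ to produce $\mathbf{b}'$ with $\#\varphi_A^{-1}(\mathbf{b}') \geq 2$ and $\mathbf{b}' \prec_S \mathbf{b} \prec_S \mathbf{a}$, with $\mathbf{b}' \neq \mathbf{b}$. This yields a strictly $\prec_S$-decreasing sequence $\mathbf{a} \succ_S \mathbf{b} \succ_S \mathbf{b}' \succ_S \cdots$ of elements each having at least two factorizations. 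Since $\prec_S$ satisfies the descending chain condition, this sequence must terminate, and it can only terminate at an element lying in $\mathrm{Betti}(S)$. That terminal element is the desired $\mathbf{a}'$.

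The main technical point to get right is the reduction to the two-factorization case inside a single $\mathcal{R}$-class: one must ensure that along the $\mathcal{R}$-chain $\mathbf{u} = \mathbf{u}_0, \mathbf{u}_1, \ldots, \mathbf{u}_k = \mathbf{u}'$, if \emph{all} consecutive pairs were "equal after removing the common part" then $\mathbf{u}=\mathbf{u}'$, contradicting $\#\varphi_A^{-1}(\mathbf{a})\geq 2$ having been used to pick $\mathbf{u}\neq\mathbf{u}'$; hence at least one consecutive pair $\mathbf{u}_j \neq \mathbf{u}_{j+1}$ with $\mathbf{u}_j\cdot\mathbf{u}_{j+1}\neq 0$ exists, and it is to \emph{that} pair that the cancellation argument is applied. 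The other routine check is that cancelling the shared coordinates genuinely produces two \emph{distinct} factorizations $\mathbf{u}_0\neq\mathbf{u}_0'$ of $\mathbf{b}$: this holds because subtracting the same vector $\mathbf{w}$ from $\mathbf{u}_j\neq\mathbf{u}_{j+1}$ preserves the inequality. Everything else is bookkeeping, and the descending chain condition does the rest.
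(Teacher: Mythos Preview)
Your proof is correct and follows essentially the same approach as the paper: pick two distinct factorizations with nonzero dot product, subtract their coordinatewise minimum to produce a strictly $\prec_S$-smaller element with at least two factorizations, and iterate until the descending chain condition forces termination at a Betti element. The paper additionally frames the argument as an induction on $\#\varphi_A^{-1}(\mathbf{a})$ (with the DCC invoked only when the count does not drop), but this is a cosmetic difference and your direct descent via the DCC is in fact the cleaner version of the same idea.
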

\begin{proof}
We will proceed by induction on $\# \varphi_A^{-1}(\mathbf{a}).$ If $\varphi_A^{-1}(\mathbf{a}) = \{\mathbf{u}, \mathbf{v}\}$ with $\mathbf{u} \cdot \mathbf{v} > 0,$ consider $\mathbf{a}' = \mathbf{a} - \sum_{i=1}^r \min(u_i,v_i) \mathbf{a}_i.$ Then, $\varphi_A^{-1}(\mathbf{a}') = \{\mathbf{u}', \mathbf{v}'\},$ with $u'_i = u_i - \min(u_i,v_i)$ and $v'_i = v_i - \min(u_i,v_i),\ i \in\{ 1, \ldots, r\},$ and $\mathbf{u}' \cdot \mathbf{v}' = 0.$ So, $\mathbf{a}' \prec \mathbf{a}$ is Betti. Assume now that the result is true for every $\mathbf{a}' \in S$ such that $2 \leq \# \varphi_A^{-1}(\mathbf{a}') < \# \varphi_A^{-1}(\mathbf{a}).$ Since $\mathbf{a}$ is not Betti, there exist $\mathbf{u}, \mathbf{v} \in \varphi_A^{-1}(\mathbf{a}), \mathbf{u} \neq \mathbf{v},$ such that $\mathbf{u} \cdot \mathbf{v} > 0.$ Consider $\mathbf{a}' = \mathbf{a} - \sum_{i=1}^r \min(u_i, v_i) \mathbf{a}_i.$ Then, we have that $2 \leq \# \varphi_A^{-1}(\mathbf{a}') \leq \# \varphi_A^{-1}(\mathbf{a}).$ If the second inequality is strict, we conclude by induction hypothesis. Otherwise, if $\mathbf{a}'$ is not Betti, we may repeat the previous argument to produce $\mathbf{a}'' \prec_S \mathbf{a}' \prec_S \mathbf{a}.$ The descending chain condition for $\prec_S$ guarantees that this process cannot continue indefinitely.
\end{proof}

\begin{remark}
Observe that the above lemma implies the existence of Betti elements in $S,$ when $S \not\cong \mathbb{N}^r,$ for any $r \geq 1.$ Otherwise, $\mathrm{Betti}(S) = \varnothing,$ because $\varphi_A$ is an isomorphism.
\end{remark}

Betti-minimal elements are characterized in the following result. As we will see later, they play an important role in the study of monoids with unique presentations.

\begin{proposition}\label{Prop1}
Let $S$ be a monoid. The element $\mathbf{a} \in \text{Betti-minimal}(S)$ if, and only, $\varphi_A^{-1}(\mathbf{a})$ has more than one $\mathcal{R}-$class and each $\mathcal{R}-$class is a singleton.
\end{proposition}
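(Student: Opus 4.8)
The plan is to prove the two implications separately, in each case exploiting how the relation $\mathcal{R}$ on a fiber $\varphi_A^{-1}(\cdot)$ interacts with adding or subtracting a common factorization. I will use throughout that $\prec_S$ is a partial order (in particular antisymmetric, since $S$ is free of units), and that $\mathbf{a}$ being a Betti element is by definition the statement that $\varphi_A^{-1}(\mathbf{a})$ has more than one $\mathcal{R}$-class; so the condition ``$\varphi_A^{-1}(\mathbf{a})$ has more than one $\mathcal{R}$-class'' is available in both directions.

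For the ``if'' direction, suppose $\mathbf{a}$ is a Betti element and every $\mathcal{R}$-class of $\varphi_A^{-1}(\mathbf{a})$ is a singleton, and assume for contradiction that $\mathbf{a}$ is \emph{not} Betti-minimal, i.e.\ there is $\mathbf{a}' \in \mathrm{Betti}(S)$ with $\mathbf{a}' \prec_S \mathbf{a}$ and $\mathbf{a}' \neq \mathbf{a}$. Put $\mathbf{b} = \mathbf{a} - \mathbf{a}' \in S \setminus \{\mathbf{0}\}$, choose $\mathbf{u}', \mathbf{v}'$ in two distinct $\mathcal{R}$-classes of $\varphi_A^{-1}(\mathbf{a}')$ (so $\mathbf{u}' \neq \mathbf{v}'$), and pick any $\mathbf{w} \in \varphi_A^{-1}(\mathbf{b})$, which is nonempty and satisfies $\mathbf{w} \neq \mathbf{0}$ because $\mathbf{b} \neq \mathbf{0}$. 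Then $\mathbf{u}' + \mathbf{w}$ and $\mathbf{v}' + \mathbf{w}$ are distinct elements of $\varphi_A^{-1}(\mathbf{a})$ with $(\mathbf{u}'+\mathbf{w}) \cdot (\mathbf{v}'+\mathbf{w}) \geq \mathbf{w} \cdot \mathbf{w} > 0$, hence $\mathcal{R}$-related in $\varphi_A^{-1}(\mathbf{a})$; so their common $\mathcal{R}$-class is not a singleton, contradicting the hypothesis.

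For the ``only if'' direction, suppose $\mathbf{a} \in \text{Betti-minimal}(S)$; then $\mathbf{a}$ is Betti, so it only remains to show each $\mathcal{R}$-class of $\varphi_A^{-1}(\mathbf{a})$ is a singleton. If some $\mathcal{R}$-class had two distinct elements, then, reading off a connecting $\mathcal{R}$-chain, there would exist $\mathbf{u} \neq \mathbf{v}$ in $\varphi_A^{-1}(\mathbf{a})$ with $\mathbf{u} \cdot \mathbf{v} > 0$. Setting $\mathbf{a}' = \mathbf{a} - \sum_{i=1}^r \min(u_i,v_i)\mathbf{a}_i$ as in the proof of Lemma \ref{Lema 1}, one gets $\mathbf{a}' \prec_S \mathbf{a}$; moreover $\mathbf{a}' \neq \mathbf{a}$, since some $\min(u_i,v_i)$ is positive and $S$ is free of units; and the two distinct vectors obtained by subtracting $\min(u_i,v_i)$ coordinatewise are factorizations of $\mathbf{a}'$, so $\# \varphi_A^{-1}(\mathbf{a}') \geq 2$. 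If $\mathbf{a}' \in \mathrm{Betti}(S)$ this already contradicts minimality of $\mathbf{a}$; otherwise Lemma \ref{Lema 1} yields $\mathbf{a}'' \in \mathrm{Betti}(S)$ with $\mathbf{a}'' \prec_S \mathbf{a}' \prec_S \mathbf{a}$, and antisymmetry of $\prec_S$ together with $\mathbf{a}' \neq \mathbf{a}$ forces $\mathbf{a}'' \neq \mathbf{a}$, again contradicting minimality of $\mathbf{a}$. Hence every $\mathcal{R}$-class of $\varphi_A^{-1}(\mathbf{a})$ is a singleton.

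The computations invoked ($(\mathbf{u}'+\mathbf{w})\cdot(\mathbf{v}'+\mathbf{w}) \geq \mathbf{w}\cdot\mathbf{w}$, and $\mathbf{u}'\cdot\mathbf{v}' = 0$ for the shrunk pair) are immediate. The one delicate point, and the reason Lemma \ref{Lema 1} is called upon rather than arguing directly, is that ``shrinking'' the bad pair $\mathbf{u},\mathbf{v}$ can land on an $\mathbf{a}'$ which has more than one factorization but is nonetheless \emph{not} a Betti element; Lemma \ref{Lema 1} is precisely what allows one to descend further to a genuine Betti element strictly below $\mathbf{a}$.
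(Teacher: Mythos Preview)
Your proof is correct and follows essentially the same approach as the paper's. The ``only if'' direction is identical (subtract the common part, invoke Lemma~\ref{Lema 1}); for the ``if'' direction the paper proves the marginally stronger statement that every strict $\prec_S$-predecessor of $\mathbf{a}$ has a unique factorization, whereas you argue directly by contradiction from the existence of a Betti predecessor, but the underlying computation (add a factorization $\mathbf{w}$ of the difference and observe $(\mathbf{u}'+\mathbf{w})\cdot(\mathbf{v}'+\mathbf{w})>0$) is the same.
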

\begin{proof}
First, observe that $\varphi_A^{-1}(\mathbf{a})$ has more than one $\mathcal{R}-$class and each $\mathcal{R}-$class is a singleton if, and only if, $\# \varphi_A^{-1}(\mathbf{a}) \geq 2$ and $\mathbf{u} \cdot \mathbf{v} = 0,$ for every $\mathbf{u}, \mathbf{v} \in \varphi_A^{-1}(\mathbf{a}),\ \mathbf{u} \neq \mathbf{v}.$

If $\mathbf{a} \in \text{Betti-minimal}(S)$ and there exist $\mathbf{u}, \mathbf{v} \in \varphi_A^{-1}(\mathbf{a}),\ \mathbf{u} \neq \mathbf{v},$ such that $\mathbf{u} \cdot \mathbf{v} > 0,$ we consider $\mathbf{a}' = \mathbf{a} - \sum_{i=1}^r \min(u_i,v_i) \mathbf{a}_i.$ Since $\# \varphi_A^{-1}(\mathbf{a}') \geq 2,$ either $\mathbf{a}' \prec_S \mathbf{a}$ is Betti or, by Lemma \ref{Lema 1}, there exist $\mathbf{a}'' \in \mathrm{Betti}(S)$ such that $\mathbf{a}'' \prec_S \mathbf{a}' \prec_S \mathbf{a},$ contradicting, in both cases, the Betti-minimality of $\mathbf{a}.$ Conversely,
we suppose that $$\varphi_A^{-1}(\mathbf{a}) = \bigcup_{i=1}^{\# \varphi_A^{-1}(\mathbf{a})} \big\{ \mathbf{u}^{(i)} \big\},$$ with $\mathbf{u}^{(i)} \cdot \mathbf{u}^{(j)} = 0,\ i \neq j.$ In particular, $\mathbf{a} \in \mathrm{Betti}(S).$ If $\mathbf{a}' \prec_S \mathbf{a},$ then $\# \varphi_A^{-1}(\mathbf{a}') = 1,$ otherwise, we will find $i \neq j$ with $\mathbf{u}^{(i)} \cdot \mathbf{u}^{(j)} \neq 0.$ Thus we conclude that $\mathbf{a} \in \text{Betti-minimal}(S).$
\end{proof}

Observe that the notion of Betti-minimal is stronger than the notion of minimal multi-element given in \cite{Aoki}. Concretely, one has that $\mathbf{a} \in S$ is a minimal multi-element if, and only if, $\varphi_A^{-1}(\mathbf{a})$ has more than one $\mathcal{R}-$class and at least one of them is a singleton (see \cite[Definition 3.2]{Aoki}).

\section{Monoids having a unique minimal presentation}\label{Sect-UPS}

According to what we have recalled and defined so far, a monoid $S$ has a unique minimal presentation if and only if the set of factorizations of all its Betti elements have just two $\mathcal R$-classes, and each of them is a singleton. Moreover, if $\mathbf{a}$ is a Betti element of $S$ and $\varphi_A^{-1}(\mathbf{a})=\{\mathbf{u},\mathbf{v}\},$ then either the pair $(\mathbf{u},\mathbf{v})$ or $(\mathbf{v},\mathbf{u})$ is in any minimal presentation of $S.$ Hence we will say that $(\mathbf{u}, \mathbf{v}) \in \mathbb{N}^r \times \mathbb{N}^r$ is \emph{indispensable}, and that $\mathbf{a}$ has \emph{unique presentation}.

\begin{example}
The numerical semigroup $S = \langle 6,10,15 \rangle$ has no indispensable elements. If one uses the techniques explained in \cite{RGS09}, one can easily see that $\mathrm{Betti}(S)=\{30\},$ and that the factorizations of $30$ are $\{ ( 0, 0, 2 ), ( 0, 3, 0 ), (5,$ $0, 0 ) \}.$ One can also use the {\tt numericalsgps} {\tt GAP} package to perform this computation (see \cite{numericalsgps}).
\end{example}

Clearly, $S$ admits a unique minimal presentation if and only if either it is isomorphic to $\mathbb N^r$ for some positive integer $r$ (and thus the empty set is its unique minimal presentation) or every element in any of its minimal presentations is indispensable. If this is the case, we say that $S$ has a \emph{unique presentation}.

The following results are straightforward consequences of Proposition \ref{Prop1}.

\begin{corollary}\label{Cor mUP1}
Let $\mathbf{a} \in S.$ The following are equivalent.
\begin{enumerate}[(a)]
\item $\mathbf{a}$ has unique presentation.
\item $\mathbf{a} \in \mathrm{Betti}(S)$ and $\# \varphi_A^{-1}(\mathbf{a}) = 2.$
\item $\mathbf{a} \in \text{Betti-minimal}(S)$ and $\# \varphi_A^{-1}(\mathbf{a}) = 2.$
\end{enumerate}
\end{corollary}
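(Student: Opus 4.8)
The plan is to prove Corollary \ref{Cor mUP1} as a direct consequence of Proposition \ref{Prop1} together with Lemma \ref{Lema 1}, by establishing the cycle of implications (a) $\Rightarrow$ (b) $\Rightarrow$ (c) $\Rightarrow$ (a). The key observation is that the definition of "unique presentation" for an element $\mathbf{a}$, as given in the paragraph preceding the corollary, already encodes exactly the condition that $\mathbf{a} \in \mathrm{Betti}(S)$ and $\varphi_A^{-1}(\mathbf{a}) = \{\mathbf{u}, \mathbf{v}\}$ is a two-element set split into two singleton $\mathcal{R}$-classes; so most of the work is unwinding definitions and invoking the two earlier results.

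First I would prove (a) $\Rightarrow$ (b). If $\mathbf{a}$ has unique presentation, then by definition $\mathbf{a} \in \mathrm{Betti}(S)$ and $\varphi_A^{-1}(\mathbf{a})$ consists of exactly two $\mathcal{R}$-classes, each a singleton; hence $\# \varphi_A^{-1}(\mathbf{a}) = 2$. Next, (b) $\Rightarrow$ (c): suppose $\mathbf{a} \in \mathrm{Betti}(S)$ and $\# \varphi_A^{-1}(\mathbf{a}) = 2$, say $\varphi_A^{-1}(\mathbf{a}) = \{\mathbf{u}, \mathbf{v}\}$. Since $\mathbf{a}$ is Betti, $\varphi_A^{-1}(\mathbf{a})$ has more than one $\mathcal{R}$-class, which with only two elements forces $\mathbf{u} \cdot \mathbf{v} = 0$ and each $\mathcal{R}$-class to be a singleton; Proposition \ref{Prop1} then gives $\mathbf{a} \in \text{Betti-minimal}(S)$, and $\# \varphi_A^{-1}(\mathbf{a}) = 2$ is retained. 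Finally, (c) $\Rightarrow$ (a): if $\mathbf{a} \in \text{Betti-minimal}(S)$ with $\# \varphi_A^{-1}(\mathbf{a}) = 2$, then by Proposition \ref{Prop1} the two $\mathcal{R}$-classes of $\varphi_A^{-1}(\mathbf{a})$ are both singletons, so $\varphi_A^{-1}(\mathbf{a}) = \{\mathbf{u}, \mathbf{v}\}$ with exactly two singleton $\mathcal{R}$-classes, which is precisely the defining condition for $\mathbf{a}$ to have unique presentation.

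There is essentially no hard obstacle here: the content is carried by Proposition \ref{Prop1} (the equivalence between Betti-minimality and having all $\mathcal{R}$-classes singletons) and by the trivial remark that a two-element set with more than one $\mathcal{R}$-class must split into two singletons. The only point requiring a little care is to make explicit, when showing (b) $\Rightarrow$ (c), that "more than one $\mathcal{R}$-class" on a set of cardinality $2$ means "two singleton classes", so that the hypothesis of Proposition \ref{Prop1} is literally met; and symmetrically in (c) $\Rightarrow$ (a) one must note that Betti-minimality with two factorizations yields a pair $(\mathbf{u},\mathbf{v})$ whose $\mathcal{R}$-classes are singletons, matching the definition of indispensable/unique presentation verbatim. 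Since the three statements are so close to being restatements of one another, I would keep the write-up to a few lines, citing Proposition \ref{Prop1} for the substantive step.
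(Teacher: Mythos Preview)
Your proposal is correct and matches the paper's approach exactly: the paper states this corollary as a ``straightforward consequence of Proposition~\ref{Prop1}'' without further proof, and your cycle (a) $\Rightarrow$ (b) $\Rightarrow$ (c) $\Rightarrow$ (a) via Proposition~\ref{Prop1} is precisely the intended argument. (Note that Lemma~\ref{Lema 1}, which you mention in your plan, is not actually needed---Proposition~\ref{Prop1} alone suffices, as your own write-up shows.)
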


\begin{corollary}\label{Cor mUP2}
A monoid $S$ is uniquely presented if, and only if, either $\mathrm{Betti}(S) = \varnothing$ or the number of Betti-minimal elements in $S$ equals the cardinality of a minimal presentation of $S.$ In particular all Betti elements of $S$ are Betti-minimal.
\end{corollary}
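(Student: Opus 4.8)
The plan is to express the cardinality of a minimal presentation as a sum of positive integers indexed by $\mathrm{Betti}(S)$, and then read off the statement from an elementary comparison of sums. First I would recall, from the procedure described in Section~\ref{Sect-pre}, that any minimal presentation of $S$ is the disjoint union $\rho=\bigcup_{\mathbf{a}\in\mathrm{Betti}(S)}\rho_{\mathbf{a}}$ (disjoint, since the two entries of every pair in $\rho_{\mathbf{a}}$ have common image $\mathbf{a}$), and that, writing $c(\mathbf{a})$ for the number of $\mathcal{R}$-classes of $\varphi_A^{-1}(\mathbf{a})$, one has $\#\rho_{\mathbf{a}}=c(\mathbf{a})-1$. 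Hence the (well-defined) cardinality of a minimal presentation of $S$ is
$$N:=\sum_{\mathbf{a}\in\mathrm{Betti}(S)}\bigl(c(\mathbf{a})-1\bigr),$$
and each summand is $\geq 1$ since $\mathbf{a}\in\mathrm{Betti}(S)$ forces $c(\mathbf{a})\geq 2$. I would also keep at hand Proposition~\ref{Prop1}, which says $\mathbf{a}\in\text{Betti-minimal}(S)$ exactly when $c(\mathbf{a})\geq 2$ and every $\mathcal{R}$-class of $\varphi_A^{-1}(\mathbf{a})$ is a singleton, together with the inclusion $\text{Betti-minimal}(S)\subseteq\mathrm{Betti}(S)$.

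Next I would dispose of the trivial alternative: if $\mathrm{Betti}(S)=\varnothing$ then $S\cong\mathbb{N}^r$ by the Remark after Lemma~\ref{Lema 1}, so $\varnothing$ is its only minimal presentation and $S$ is uniquely presented; this is the case explicitly allowed in the statement, and the ``in particular'' clause holds vacuously. So assume $\mathrm{Betti}(S)\neq\varnothing$; since then $S\not\cong\mathbb{N}^r$, recall (from the discussion opening Section~\ref{Sect-UPS}, or equivalently from Corollary~\ref{Cor mUP1}) that $S$ is uniquely presented if and only if $\#\varphi_A^{-1}(\mathbf{a})=2$ for every $\mathbf{a}\in\mathrm{Betti}(S)$.

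For the forward implication, suppose $S$ is uniquely presented. Then $\#\varphi_A^{-1}(\mathbf{a})=2$ for every $\mathbf{a}\in\mathrm{Betti}(S)$; in particular $c(\mathbf{a})=2$ and each $\mathcal{R}$-class of $\varphi_A^{-1}(\mathbf{a})$ is a singleton, so $\mathbf{a}\in\text{Betti-minimal}(S)$ by Proposition~\ref{Prop1}. Thus $\mathrm{Betti}(S)=\text{Betti-minimal}(S)$ (this is the ``in particular'' clause) and $N=\sum_{\mathbf{a}\in\mathrm{Betti}(S)}1=\#\text{Betti-minimal}(S)$, as required.

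For the converse, suppose $N=\#\text{Betti-minimal}(S)$. Using $\text{Betti-minimal}(S)\subseteq\mathrm{Betti}(S)$ and $c(\mathbf{a})-1\geq 1$ for every $\mathbf{a}\in\mathrm{Betti}(S)$ gives
$$N=\sum_{\mathbf{a}\in\mathrm{Betti}(S)}\bigl(c(\mathbf{a})-1\bigr)\ \geq\ \sum_{\mathbf{a}\in\text{Betti-minimal}(S)}\bigl(c(\mathbf{a})-1\bigr)\ \geq\ \#\text{Betti-minimal}(S).$$
The hypothesis makes both inequalities equalities. Equality in the first forces $\mathrm{Betti}(S)\setminus\text{Betti-minimal}(S)=\varnothing$, since every omitted summand is positive; equality in the second then forces $c(\mathbf{a})-1=1$, i.e. $c(\mathbf{a})=2$, for every $\mathbf{a}\in\text{Betti-minimal}(S)=\mathrm{Betti}(S)$. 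Since every Betti element is now Betti-minimal, Proposition~\ref{Prop1} makes all its $\mathcal{R}$-classes singletons, so $\#\varphi_A^{-1}(\mathbf{a})=c(\mathbf{a})=2$; by Corollary~\ref{Cor mUP1} every Betti element has unique presentation, hence $S$ is uniquely presented. I do not expect a real obstacle: the only point needing care is the bookkeeping that identifies $N$ with a sum of positive integers over $\mathrm{Betti}(S)$, after which the result is just the observation that an equality between two such nested sums forces both the index set and each individual summand to be as small as possible.
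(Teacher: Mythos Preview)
Your proof is correct and follows precisely the approach the paper intends: the authors state this corollary as a ``straightforward consequence of Proposition~\ref{Prop1}'' and give no further argument, so your explicit derivation---writing the cardinality of a minimal presentation as $N=\sum_{\mathbf{a}\in\mathrm{Betti}(S)}(c(\mathbf{a})-1)$ from the construction in Section~\ref{Sect-pre}, then using Proposition~\ref{Prop1} to force $\mathrm{Betti}(S)=\text{Betti-minimal}(S)$ and $c(\mathbf{a})=2$ via the two nested inequalities---is exactly the bookkeeping they leave to the reader.
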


By using the close relationship between toric ideals and semigroups, one can obtain necessary and sufficient conditions for a semigroup to be uniquely presented from the results in \cite{Charalambous07, OjVi2, Takemura}.

\begin{example}\label{4621}
The above characterization does not hold if we remove the minimal condition. For instance, $S=\langle 4,6,21\rangle$ has a minimal presentation with cardinality 2, and $\mathrm{Betti}(S)=\{12,42\}$ (one can use the {\tt numericalsgps} package to compute this, \cite{numericalsgps}). However, 42 admits 5 different factorizations in $S.$
\end{example}

\begin{example}\label{Ex ED2}
Let $S\subset \mathbb Z^r$ be a monoid minimally generated by $A = \{\mathbf{a}_1, \mathbf{a}_2\}$ for some positive integer $r.$ If the rank of the group spanned by $S$ is one, there exist $u$ and $v \in \mathbb{N}$ such that $u \mathbf{a}_1 = v \mathbf{a}_2$. So, there is only one Betti element $\mathbf{a} = u \mathbf{a}_1 = v \mathbf{a}_2$ and $\varphi_A^{-1}(\mathbf{a}) = \big\{ (u,0), (0,v) \big\}$. Therefore, $S$ is uniquely presented. In particular, embedding dimension $2$ numerical semigroups are uniquely presented (the group generated by any numerical semigroup is $\mathbb Z$).
\end{example}

\section{Gluings}\label{Sect Glue}

We first fix the notation of this section. Let $S$ be an affine semigroup generated by $A = \{\mathbf{a}_1, \ldots, \mathbf{a}_r\} \subseteq \mathbb{Z}^n.$ Let $A_1$ and $A_2$ be two proper subsets of $A$ such that $A = A_1 \cup A_2$ and $A_1 \cap A_2 = \varnothing.$ Let $S_1$ and $S_2$ be the affine semigroups generated by $A_1$ and $A_2,$ respectively.

Set $r_1$ and $r_2$ to be the cardinality of $A_1$ and $A_2,$ respectively. After rearranging the elements of $A$ if necessary, we may assume that $A_1 = \{\mathbf{a}_1, \ldots, \mathbf{a}_{r_1}\}$ and $A_2 = \{\mathbf{a}_{r_1 + 1}, \ldots, \mathbf{a}_r\}.$

Since $\mathbb{N}^r = \mathbb{N}^{r_1} \oplus \mathbb{N}^{r_2},$ elements in $\mathbb{N}^{r_1}$ and $\mathbb{N}^{r_2}$ may be regarded as elements in $\mathbb{N}^r$ of the form $(-, 0)$ and $(0, -),$ respectively. With this in mind, subsets of $\mathbb{N}^{r_i}$ will be considered as subsets of $\mathbb{N}^r,\ i \in\{ 1,2\}.$ And the elements of $\sim_{A_1}$ and $\sim_{A_2}$ are viewed inside $\sim_A.$

The monoid $S$ is said to be the \emph{gluing} of $S_1$ and $S_2$ if $G(S_1) \cap G(S_2) = \mathbf{d} \mathbb{Z},$ with $\mathbf{d} \in S_1 \cap S_2 \setminus \{0\},$ where $G(-)$ means the group generated by $-.$

According to \cite[Theorem 1.4]{Ros97}, $S$ admits a presentation of the form $\rho_1\cup\rho_2\cup\{((\mathbf{u},0),(0,\mathbf{v}))\},$ where $\rho_1$ and $\rho_2$ are presentations of $S_1$ and $S_2,$ respectively, and $\mathbf{u}\in \varphi_{A_1}^{-1}(\mathbf{d})$ and $\mathbf{b}\in\varphi_{A_2}^{-1}(\mathbf{d}).$ We next explore which are the conditions we must impose on $S_1,$ $S_2$ and $\mathbf{d}$ in order to ensure that $S$ has a unique minimal presentation. We start by describing the Betti elements of $S,$ and for this we need a lemma describing how are the factorizations of $\mathbf{d}.$

\begin{lemma}\label{Lema Glue1}
Let $S$ be the gluing of $S_1$ and $S_2$ with $G(S_1) \cap G(S_2) = \mathbf{d} \mathbb{Z}.$ Every factorization of $\mathbf{d}$ in $S$ is either a factorization of $\mathbf{d}$ in $S_1$ or a factorization of $\mathbf{d}$ in $S_2.$ In particular $\mathbf{d} \in \mathrm{Betti}(S).$
\end{lemma}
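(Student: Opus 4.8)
The plan is to take an arbitrary factorization of $\mathbf{d}$ in $S$ and show it cannot mix generators from $A_1$ and from $A_2$; the point where the two semigroups glue forces any such mixed expression to collapse. Concretely, suppose $(\mathbf{u},\mathbf{v}) \in \varphi_A^{-1}(\mathbf{d})$ with $\mathbf{u} \in \mathbb{N}^{r_1}$ and $\mathbf{v} \in \mathbb{N}^{r_2}$, so that $\mathbf{d} = \varphi_{A_1}(\mathbf{u}) + \varphi_{A_2}(\mathbf{v})$. I would first observe that $\varphi_{A_1}(\mathbf{u}) = \mathbf{d} - \varphi_{A_2}(\mathbf{v})$ lies in $G(S_1)$ (since $\mathbf{d} \in S_1$) and also in $G(S_2)$ (since $\mathbf{d} \in S_2 \subseteq G(S_2)$ and $\varphi_{A_2}(\mathbf{v}) \in G(S_2)$). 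Hence $\varphi_{A_1}(\mathbf{u}) \in G(S_1) \cap G(S_2) = \mathbf{d}\mathbb{Z}$, so $\varphi_{A_1}(\mathbf{u}) = k\mathbf{d}$ for some $k \in \mathbb{Z}$. Symmetrically $\varphi_{A_2}(\mathbf{v}) = (1-k)\mathbf{d}$.

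Now I would use the fact that $S$ is free of units, hence combinatorially finite, and that $\prec_S$ satisfies the descending chain condition. Since $\varphi_{A_1}(\mathbf{u}) \succeq_S \mathbf{0}$ and $\varphi_{A_2}(\mathbf{v}) \succeq_S \mathbf{0}$ and their sum is $\mathbf{d}$, both $k\mathbf{d}$ and $(1-k)\mathbf{d}$ belong to $S$ with $k\mathbf{d} \prec_S \mathbf{d}$ (and likewise for $(1-k)\mathbf{d}$). Because $\mathbf{d} \neq \mathbf{0}$ and $S$ has no units, $k\mathbf{d} \in S$ forces $k \geq 0$, and $k\mathbf{d} \preceq_S \mathbf{d}$ with $\mathbf{d} \in S^\ast$ forces $k \leq 1$; the same applies to $1-k$. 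Thus $k \in \{0,1\}$. If $k = 0$ then $\varphi_{A_1}(\mathbf{u}) = \mathbf{0}$, and since $S$ is free of units (so the only element of $S$ with an additive inverse is $\mathbf{0}$, and in particular $\sum u_i \mathbf{a}_i = \mathbf{0}$ with $u_i \geq 0$ forces $\mathbf{u} = \mathbf{0}$ — here one uses that each $\mathbf{a}_i \neq \mathbf{0}$ and positivity), we get $\mathbf{u} = \mathbf{0}$, so $(\mathbf{u},\mathbf{v}) = (\mathbf{0},\mathbf{v})$ is a factorization of $\mathbf{d}$ in $S_2$. Symmetrically, $k = 1$ gives $\mathbf{v} = \mathbf{0}$ and $(\mathbf{u},\mathbf{v})$ is a factorization of $\mathbf{d}$ in $S_1$. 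This proves the first assertion.

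For the ``in particular'' clause, recall $\mathbf{d} \in S_1 \cap S_2 \setminus \{\mathbf{0}\}$ by definition of gluing, so $\mathbf{d}$ has at least one factorization $(\mathbf{u},\mathbf{0})$ using only $A_1$-generators and at least one factorization $(\mathbf{0},\mathbf{v})$ using only $A_2$-generators, with $\mathbf{u} \neq \mathbf{0}$ and $\mathbf{v} \neq \mathbf{0}$. These two are distinct factorizations of $\mathbf{d}$, and their dot product is $\mathbf{u}\cdot\mathbf{0} + \mathbf{0}\cdot\mathbf{v} = 0$ since they have disjoint supports; more generally, by the first part any two factorizations of $\mathbf{d}$ coming one from $S_1$ and one from $S_2$ have disjoint supports and hence dot product $0$. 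Therefore the two factorizations $(\mathbf{u},\mathbf{0})$ and $(\mathbf{0},\mathbf{v})$ lie in different $\mathcal{R}$-classes, so $\varphi_A^{-1}(\mathbf{d})$ has more than one $\mathcal{R}$-class, i.e. $\mathbf{d} \in \mathrm{Betti}(S)$.

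The main obstacle is the careful handling of the ``free of units'' hypothesis: one must justify that $k\mathbf{d} \in S$ with $\mathbf{0} \preceq_S k\mathbf{d} \preceq_S \mathbf{d}$ pins down $k \in \{0,1\}$, and that $\varphi_{A_1}(\mathbf{u}) = \mathbf{0}$ with $\mathbf{u} \in \mathbb{N}^{r_1}$ forces $\mathbf{u} = \mathbf{0}$. Both are immediate from positivity ($S \cap (-S) = \{\mathbf{0}\}$), but they are the steps that genuinely use the standing hypotheses on $S$ rather than the gluing condition alone; the rest is bookkeeping with $G(S_1) \cap G(S_2) = \mathbf{d}\mathbb{Z}$.
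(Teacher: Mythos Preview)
Your proof is correct and follows essentially the same route as the paper's: take an arbitrary factorization of $\mathbf{d}$, observe that its $S_2$-part (equivalently, $\mathbf{d}$ minus its $S_1$-part) lies in $G(S_1)\cap G(S_2)=\mathbf{d}\mathbb{Z}$, and then use positivity to force the resulting integer multiple of $\mathbf{d}$ to be $0$ or $\mathbf{d}$ itself. The paper is terser on the step $z\in\{0,1\}$ (it simply asserts it), whereas you spell out how $S\cap(-S)=\{\mathbf{0}\}$ and the minimality of the generators are used; but the argument is the same.
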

\begin{proof}
By definition $\mathbf{d} \in S_1 \cap S_2 \setminus \{0\},$ so, there exist $\mathbf{u} \in \mathbb{N}^{r_1}$ and $\mathbf{v} \in \mathbb{N}^{r_2}$ such that $\mathbf{d} = \sum_{i = 1}^{r_1} u_i \mathbf{a}_i = \sum_{i = r_1 + 1}^{r} v_i \mathbf{a}_i.$ If $\mathbf{d} = \sum_{i = 1}^r w_i \mathbf{a}_i = \sum_{i = 1}^{r_1} w_i \mathbf{a}_i + \sum_{i = r_1 + 1}^{r} w_i \mathbf{a}_i,$ then $$\mathbf{d} - \sum_{i = 1}^{r_1} w_i \mathbf{a}_i = \sum_{i = 1}^{r_1} u_i \mathbf{a}_i - \sum_{i = 1}^{r_1} w_i \mathbf{a}_i = \sum_{i = r_1 + 1}^{r} w_i \mathbf{a}_i \in G(S_1) \cap G(S_2),$$ that is to say, $\mathbf{d} - \sum_{i = 1}^{r_1} w_i \mathbf{a}_i = z \mathbf{d}.$ Therefore, either $z = 1$ and then $w_i = 0,\ i\in \{ 1, \ldots, r_1\},$ or $z = 0$ and then $w_i = 0,\ i \in\{ r_1 + 1, \ldots, r\},$ as claimed.

Moreover, we have that $\varphi_A^{-1}(\mathbf{d}) = \varphi_{A_1}^{-1}(\mathbf{d}) \cup \varphi_{A_2}^{-1}(\mathbf{d})$ with $(\mathbf{u}, 0) \cdot (0, \mathbf{v}) = 0$ for every $\mathbf{u} \in \varphi_{A_1}^{-1}(\mathbf{d})$ and $\mathbf{v} \in \varphi_{A_2}^{-1}(\mathbf{d}),$ which means that $\varphi_A^{-1}(\mathbf{d})$ has at least two $\mathcal R$-classes. Hence $\mathbf{d} \in \mathrm{Betti}(S).$
\end{proof}

\begin{theorem}\label{Th Glue1}
Let $S$ be the gluing of $S_1$ and $S_2$, and $G(S_1) \cap G(S_2) = \mathbf{d} \mathbb{Z}.$ Then, $$\mathrm{Betti}(S) = \mathrm{Betti}(S_1) \cup \mathrm{Betti}(S_2) \cup \{\mathbf{d}\}.$$
\end{theorem}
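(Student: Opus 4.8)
The plan is to prove the two inclusions separately. For the inclusion $\mathrm{Betti}(S_1) \cup \mathrm{Betti}(S_2) \cup \{\mathbf{d}\} \subseteq \mathrm{Betti}(S)$, the element $\mathbf{d}$ is already handled by Lemma \ref{Lema Glue1}. So I would take $\mathbf{a} \in \mathrm{Betti}(S_1)$ (the case $\mathbf{a}\in\mathrm{Betti}(S_2)$ being symmetric) and argue that $\mathbf{a} \in \mathrm{Betti}(S)$. The key point is to show $\varphi_A^{-1}(\mathbf{a}) = \varphi_{A_1}^{-1}(\mathbf{a})$: if $\mathbf{w} = (\mathbf{w}_1,\mathbf{w}_2)\in\varphi_A^{-1}(\mathbf{a})$ has nonzero $\mathbf{w}_2$-part, then $\varphi_{A_2}(\mathbf{w}_2) = \mathbf{a} - \varphi_{A_1}(\mathbf{w}_1) \in G(S_1)\cap G(S_2) = \mathbf{d}\mathbb{Z}$, forcing $\varphi_{A_2}(\mathbf{w}_2)$ to be a nonnegative multiple of $\mathbf{d}$ lying in $S_1$; this pushes $\mathbf{a}$ above $\mathbf{d}$ in a way I must reconcile with $\mathbf{a}\in\mathrm{Betti}(S_1)$ — the cleanest route is probably to first establish the stronger structural fact below and deduce this inclusion from it. Since the $\mathcal R$-classes of $\varphi_A^{-1}(\mathbf{a})$ computed inside $\mathbb{N}^r$ coincide with those computed inside $\mathbb{N}^{r_1}$ (the $\mathcal R$-relation only depends on dot products among the factorizations themselves), $\varphi_A^{-1}(\mathbf{a})$ has more than one $\mathcal R$-class, i.e.\ $\mathbf{a}\in\mathrm{Betti}(S)$.

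For the reverse inclusion $\mathrm{Betti}(S) \subseteq \mathrm{Betti}(S_1) \cup \mathrm{Betti}(S_2) \cup \{\mathbf{d}\}$, let $\mathbf{a} \in \mathrm{Betti}(S)$ with $\mathbf{a}\neq\mathbf{d}$. The strategy is to analyze a factorization $\mathbf{w} = (\mathbf{w}_1,\mathbf{w}_2)$ of $\mathbf{a}$ and show that $\mathbf{a}$ lies entirely ``on one side''. Write $\mathbf{a} = \varphi_{A_1}(\mathbf{w}_1) + \varphi_{A_2}(\mathbf{w}_2)$ with $\mathbf{s}_1 := \varphi_{A_1}(\mathbf{w}_1)\in S_1$, $\mathbf{s}_2 := \varphi_{A_2}(\mathbf{w}_2)\in S_2$. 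The crucial claim is that the decomposition $\mathbf{a} = \mathbf{s}_1 + \mathbf{s}_2$ with $\mathbf{s}_1\in S_1$, $\mathbf{s}_2\in S_2$ is essentially unique: if $\mathbf{a} = \mathbf{s}_1' + \mathbf{s}_2'$ is another such, then $\mathbf{s}_1 - \mathbf{s}_1' = \mathbf{s}_2' - \mathbf{s}_2 \in G(S_1)\cap G(S_2) = \mathbf{d}\mathbb{Z}$, so the two decompositions differ by adding/subtracting a multiple of $\mathbf{d}$ (realized via $\mathbf{u}$ on one side and $\mathbf{v}$ on the other). From this I would deduce that, after moving all ``shared'' copies of $\mathbf{d}$ to one side, every factorization of $\mathbf{a}$ restricts to a factorization of $\mathbf{s}_1$ in $S_1$ or of $\mathbf{s}_2$ in $S_2$ (with the split points differing only by the $\mathbf{u}\leftrightarrow\mathbf{v}$ exchange), and the relation $((\mathbf{u},0),(0,\mathbf{v}))$ connects these blocks. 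Then $\varphi_A^{-1}(\mathbf{a})$ having more than one $\mathcal R$-class must come from either $\varphi_{A_1}^{-1}(\mathbf{s}_1')$ or $\varphi_{A_2}^{-1}(\mathbf{s}_2')$ having more than one $\mathcal R$-class for some representative, giving $\mathbf{a} = \mathbf{s}_1'\in\mathrm{Betti}(S_1)$ or $\mathbf{a} = \mathbf{s}_2'\in\mathrm{Betti}(S_2)$ — unless the $\mathcal R$-classes are separated precisely by the gluing relation, which (since $\mathbf{a}\neq\mathbf{d}$) would still force $\mathbf{d}\prec_S\mathbf{a}$ and the $\mathcal R$-class of the factorization through $\mathbf{d}$ to be non-singleton, pushing the Betti-ness down to $\mathbf{d}$ or to a $\mathrm{Betti}(S_i)$.

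I expect the main obstacle to be the bookkeeping in the reverse inclusion: carefully tracking how a chain $\mathbf{u}_0,\ldots,\mathbf{u}_k$ of factorizations of $\mathbf{a}$ in $\mathbb{N}^r$ with consecutive dot products nonzero decomposes relative to the splitting $\mathbb{N}^r = \mathbb{N}^{r_1}\oplus\mathbb{N}^{r_2}$, and showing that the only way two factorizations of $\mathbf{a}$ can fail to be $\mathcal R$-related is that they already fail to be $\mathcal R$-related within one $S_i$ after stripping the common part. The uniqueness-of-decomposition lemma (a direct consequence of $G(S_1)\cap G(S_2) = \mathbf{d}\mathbb{Z}$, proved exactly as in Lemma \ref{Lema Glue1}) is the engine that makes this work, so I would isolate it as a preliminary step before attacking the $\mathcal R$-class argument. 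Everything else — the descending chain condition to terminate reductions, and the observation that dot products and hence $\mathcal R$-classes are insensitive to the ambient $\mathbb{N}^r$ versus $\mathbb{N}^{r_i}$ — is routine.
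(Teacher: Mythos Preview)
Your proposal has the right instincts but contains a genuine gap in the $\supseteq$ direction and misses a clean shortcut for $\subseteq$.

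\medskip
\textbf{The $\supseteq$ direction.} Your plan hinges on showing $\varphi_A^{-1}(\mathbf{a}) = \varphi_{A_1}^{-1}(\mathbf{a})$ for $\mathbf{a}\in\mathrm{Betti}(S_1)$, but this is simply false in general: if $\mathbf{a}-\mathbf{d}\in S_1$, then any $A_1$-factorization of $\mathbf{a}-\mathbf{d}$ concatenated with an $A_2$-factorization of $\mathbf{d}$ gives a genuinely mixed element of $\varphi_A^{-1}(\mathbf{a})$. You sense the problem (``this pushes $\mathbf{a}$ above $\mathbf{d}$ in a way I must reconcile\ldots''), but nothing about $\mathbf{a}\in\mathrm{Betti}(S_1)$ rules this out. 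The paper does \emph{not} try to prove the factorization sets coincide. Instead it fixes one factorization $\mathbf{u}\in\varphi_{A_1}^{-1}(\mathbf{d})$ and, for any mixed factorization $\bar{\mathbf{w}}=(\bar{\mathbf{w}}_1,\bar{\mathbf{w}}_2)$ of $\mathbf{a}$, replaces the $A_2$-part (which equals $z\mathbf{d}$ for some $z>0$) by $z\mathbf{u}$, yielding a pure $A_1$-factorization $\tilde{\mathbf{w}}=\bar{\mathbf{w}}_1+z\mathbf{u}$. The point is that $\tilde{\mathbf{w}}$ is $\mathcal R$-linked to whatever $\bar{\mathbf{w}}$ was linked to (they share $\bar{\mathbf{w}}_1$), and any two such converted factorizations are $\mathcal R$-linked to each other (they all share the support of $\mathbf{u}$). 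Hence an $\mathcal R$-chain in $\varphi_A^{-1}(\mathbf{a})$ connecting elements of two distinct $S_1$-classes converts to an $\mathcal R$-chain in $\varphi_{A_1}^{-1}(\mathbf{a})$ connecting them --- a contradiction. This conversion trick is the missing idea.

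\medskip
\textbf{The $\subseteq$ direction.} Your combinatorial analysis of chains is workable in principle but gets muddled at the end: the sentence ``pushing the Betti-ness down to $\mathbf{d}$ or to a $\mathrm{Betti}(S_i)$'' is not an argument --- having $\mathbf{d}\prec_S\mathbf{a}$ does not by itself force $\mathbf{a}\in\mathrm{Betti}(S_1)\cup\mathrm{Betti}(S_2)$. The paper sidesteps all of this by invoking Rosales' presentation theorem: $S$ admits a presentation $\rho_1\cup\rho_2\cup\{((\mathbf{u},0),(0,\mathbf{v}))\}$ with $\rho_i$ a presentation of $S_i$, and since every minimal presentation is obtained by discarding relations from a given presentation, every Betti element of $S$ already appears as $\varphi_A$ of some pair in this list. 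That immediately gives $\mathrm{Betti}(S)\subseteq\mathrm{Betti}(S_1)\cup\mathrm{Betti}(S_2)\cup\{\mathbf{d}\}$ with no chain-tracking at all.
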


\begin{proof}
By Theorem 1.4 in \cite{Ros97}, $S$ admits a presentation of the form $\rho = \rho_1 \cup \rho_2 \cup \{\big( (\mathbf{u},0), (0, \mathbf{v}) \big) \},$ where $\rho_1$ and $\rho_2$ are sets of generators for $\sim_{A_1}$ and $\sim_{A_2},$ respectively, and $\varphi_{A_1}(\mathbf{u}) = \varphi_{A_2}(\mathbf{v}) = \mathbf{d}.$ Moreover, since every system of generators of $\sim_A$ can be refined to a minimal system of generators (see \cite[Chapter, 9]{RGS99}), from the shape of $\rho,$ we deduce that the Betti elements of $S$ are either a Betti element of $S_1,$ a Betti element of $S_2$ or $\mathbf{d}$ itself, that is to say, $\mathrm{Betti}(S) \subseteq \mathrm{Betti}(S_1) \cup \mathrm{Betti}(S_2) \cup \{\mathbf{d}\}.$

Recall that, by Lemma \ref{Lema Glue1}, $\mathbf{d} \in \mathrm{Betti}(S).$ Therefore, to demonstrate the inclusion $\mathrm{Betti}(S) \supseteq \mathrm{Betti}(S_1) \cup \mathrm{Betti}(S_2) \cup \{\mathbf{d}\},$ it suffices to prove $\mathrm{Betti}(S_1) \cup \mathrm{Betti}(S_2) \subseteq \mathrm{Betti}(S).$ Suppose, in order to produce a contradiction, that there is $\mathbf{b} \in \mathrm{Betti}(S_1)\setminus \mathrm{Betti}(S)$ (the case where $\mathbf{b} \in \mathrm{Betti}(S_2)\setminus \mathrm{Betti}(S)$ is argued similarly).

Since $\mathbf{b} \in \mathrm{Betti}(S_1),$ there exist two $\mathcal R$-classes in $\varphi^{-1}_{A_1}(\mathbf{b}),$ say $\mathcal{C}_1$ and $\mathcal{C}_2.$ And as $\mathbf{b} \not\in \mathrm{Betti}(S),$ $\varphi^{-1}_A(\mathbf{b})$ has only one $\mathcal R$-class. Hence there exist
\begin{itemize}
\item $\mathbf{w} \in \mathcal C_1$ and $\bar{\mathbf{w}} \in \varphi^{-1}_A(\mathbf{b})$
such that $\bar{\mathbf{w}} \cdot (\mathbf{w},0) \neq 0$ and $\mathbf{b} = \sum_{i = 1}^{r_1} \bar{w}_i \mathbf{a}_i + \sum_{i = r_1 + 1}^{r} \bar{w}_i \mathbf{a},$ where $\bar{w}_i,\ 1 \leq i \leq r,$ are the coordinates of $\bar{\mathbf{w}}$ and $\bar{w}_i \neq 0$ for some $r_1 + 1 \leq i \leq r.$
\item $\mathbf{w}' \in \mathcal C_2$ and $\bar{\mathbf{w}}' \in \varphi^{-1}_A(\mathbf{b})$
such that $\bar{\mathbf{w}}' \cdot (\mathbf{w}',0) \neq 0$ and $\mathbf{b} = \sum_{i = 1}^{r_1} \bar{w}'_i \mathbf{a}_i + \sum_{i = r_1 + 1}^{r} \bar{w}'_i \mathbf{a}_i,$ where $\bar{w}'_i,\ 1 \leq i \leq r,$ are the coordinates of $\bar{\mathbf{w}}'$ and $\bar{w}'_i \neq 0$ for some $r_1 + 1 \leq i \leq r.$
\end{itemize}
Since $0 \neq \mathbf{b} - \sum_{i = 1}^{r_1} \bar{w}_i \mathbf{a}_i = \sum_{i = r_1 + 1}^{r} \bar{w}_i \mathbf{a}_i \in G(S_1) \cap G(S_2) = \mathbf{d} \mathbb{Z},$ we have that $\mathbf{b} = \sum_{i = 1}^{r_1} \bar{w}_i \mathbf{a}_i + \sum_{i = 1}^{r_1} z u_i \mathbf{a}_i = \sum_{i = 1}^{r_1} (\bar{w}_i + z u_i) \mathbf{a}_i,$ for some $z > 0.$ Analogously, $\mathbf{b} = \sum_{i = 1}^{r_1} (\bar{w}'_i + z' u_i) \mathbf{a}_i,$ for some $z' > 0.$

Let $\tilde{\mathbf{w}}$ and $\tilde{\mathbf{w}}' \in \varphi^{-1}_{A_1}(\mathbf{b})$ be the corresponding vectors of coordinates $\bar{w}_i + z u_i, 1 \leq i \leq r_1$ and $\bar{w}'_i + z' u_i, 1 \leq i \leq r_1,$ respectively. This yields a contradiction, since $\mathbf{w}$ and $\mathbf{w}'$ are not $\mathcal R$-related, however $\mathbf{w} \cdot \tilde{\mathbf{w}} \neq 0,$ $\tilde{\mathbf{w}} \cdot \tilde{\mathbf{w}}'\neq 0$ and $\tilde{\mathbf{w}}'\cdot \mathbf{w}' \neq 0.$
\end{proof}

Observe that $\varphi_A^{-1}(\mathbf{d}) \supseteq \{ (\mathbf{u},0), (0, \mathbf{v}) \},$ with $\varphi_{A_1}(\mathbf{u}) = \varphi_{A_2}(\mathbf{v}) = \mathbf{d},$ and that the equality holds if, and only if, $\mathbf{d}$ has unique presentation as element of $S.$

\begin{corollary}\label{Cor Glue1}
Let $S$ be the gluing of $S_1$ and $S_2$ and $G(S_1) \cap G(S_2) = \mathbf{d} \mathbb{Z}.$ Then $\mathbf{d} \in S$ has unique presentation if, and only if, $\mathbf{d} - \mathbf{a} \not\in S$ for every $\mathbf{a} \in \mathrm{Betti}(S_1)\cup \mathrm{Betti}(S_2).$
\end{corollary}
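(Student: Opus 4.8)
The plan is to characterize when $\varphi_A^{-1}(\mathbf{d})$ consists of exactly the two points $(\mathbf{u},0)$ and $(0,\mathbf{v})$, since by the observation preceding the corollary this happens precisely when $\mathbf{d}$ has unique presentation as an element of $S$. By Lemma \ref{Lema Glue1} we already know $\varphi_A^{-1}(\mathbf{d}) = \varphi_{A_1}^{-1}(\mathbf{d}) \cup \varphi_{A_2}^{-1}(\mathbf{d})$, and these two sets are ``orthogonal'' (every factorization in one has zero dot product with every factorization in the other). Hence $\varphi_A^{-1}(\mathbf{d})$ has exactly two points if and only if $\# \varphi_{A_1}^{-1}(\mathbf{d}) = 1$ and $\# \varphi_{A_2}^{-1}(\mathbf{d}) = 1$; that is, $\mathbf{d}$ must have a unique factorization in $S_1$ and a unique factorization in $S_2$. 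So the corollary reduces to showing: $\# \varphi_{A_i}^{-1}(\mathbf{d}) = 1$ for $i \in \{1,2\}$ if and only if $\mathbf{d} - \mathbf{a} \notin S$ for every $\mathbf{a} \in \mathrm{Betti}(S_1) \cup \mathrm{Betti}(S_2)$.

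For the direction that matters most, suppose $\mathbf{d} - \mathbf{a} \notin S$ for all $\mathbf{a} \in \mathrm{Betti}(S_1) \cup \mathrm{Betti}(S_2)$. I want to conclude $\# \varphi_{A_1}^{-1}(\mathbf{d}) = 1$ (the argument for $S_2$ is symmetric). Suppose not, so $\# \varphi_{A_1}^{-1}(\mathbf{d}) \geq 2$. If $\mathbf{d} \in \mathrm{Betti}(S_1)$, then taking $\mathbf{a} = \mathbf{d}$ we get $\mathbf{d} - \mathbf{a} = \mathbf{0} \in S$, a contradiction — so I should be careful: the cleanest formulation uses Lemma \ref{Lema 1}. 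Since $\# \varphi_{A_1}^{-1}(\mathbf{d}) \geq 2$, either $\mathbf{d} \in \mathrm{Betti}(S_1)$ (and then $\mathbf{d} - \mathbf{d} = \mathbf{0} \in S$, contradiction, since $\mathbf{0} \in S$ always), or by Lemma \ref{Lema 1} there is $\mathbf{b} \in \mathrm{Betti}(S_1)$ with $\mathbf{b} \prec_{S_1} \mathbf{d}$, i.e.\ $\mathbf{d} - \mathbf{b} \in S_1 \subseteq S$, again contradicting the hypothesis with $\mathbf{a} = \mathbf{b}$. Either way we get a contradiction, so $\# \varphi_{A_1}^{-1}(\mathbf{d}) = 1$, and likewise for $S_2$; hence $\mathbf{d}$ has unique presentation in $S$. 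One subtlety to handle: $\mathbf{b} \prec_{S_1} \mathbf{d}$ gives $\mathbf{d} - \mathbf{b} \in S_1$, and I need $S_1 \subseteq S$, which is immediate since $A_1 \subseteq A$; and I need to verify that ``$\prec_{S_1}$'' and the ambient ``$\prec_S$'' are compatible enough for the argument, but really all I use is the monoid membership $\mathbf{d} - \mathbf{b} \in S_1$.

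For the converse, assume $\mathbf{d}$ has unique presentation in $S$, so $\# \varphi_{A_1}^{-1}(\mathbf{d}) = \# \varphi_{A_2}^{-1}(\mathbf{d}) = 1$. Let $\mathbf{a} \in \mathrm{Betti}(S_1) \cup \mathrm{Betti}(S_2)$, say $\mathbf{a} \in \mathrm{Betti}(S_1)$, and suppose toward a contradiction that $\mathbf{d} - \mathbf{a} \in S$. I would write $\mathbf{d} - \mathbf{a} = \varphi_A(\mathbf{w})$ for some $\mathbf{w} \in \mathbb{N}^r$; since $\mathbf{a} \in S_1$ has at least two factorizations in $S_1$ (as a Betti element), say $\mathbf{p}, \mathbf{q} \in \varphi_{A_1}^{-1}(\mathbf{a})$ with $\mathbf{p} \neq \mathbf{q}$, I get $\mathbf{w} + \mathbf{p}$ and $\mathbf{w} + \mathbf{q}$ are two distinct elements of $\varphi_A^{-1}(\mathbf{d})$ — distinct because $\mathbf{p} \neq \mathbf{q}$. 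This contradicts $\# \varphi_A^{-1}(\mathbf{d}) = 1$. (Here I should double-check $\mathbf{w} + \mathbf{p} \ne \mathbf{w} + \mathbf{q}$ as vectors in $\mathbb N^r$, which is clear, and that both genuinely map to $\mathbf d$.) The case $\mathbf{a} \in \mathrm{Betti}(S_2)$ is identical. I do not expect a serious obstacle here; the only points requiring care are the bookkeeping with the direct-sum decomposition $\mathbb{N}^r = \mathbb{N}^{r_1} \oplus \mathbb{N}^{r_2}$ (so that factorizations of $\mathbf d$ in $S_i$ sit inside $\varphi_A^{-1}(\mathbf d)$ in the natural way, which Lemma \ref{Lema Glue1} has already set up) and making sure the ``$\mathbf{0}$'' corner case is not accidentally swept under the rug when $\mathbf d$ itself lies in $\mathrm{Betti}(S_1) \cup \mathrm{Betti}(S_2)$.
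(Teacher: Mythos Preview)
Your reverse direction (assuming $\mathbf{d}-\mathbf{a}\notin S$ for all such $\mathbf{a}$ and concluding unique presentation) is essentially the paper's own argument: both invoke Lemma~\ref{Lema 1} inside $S_i$ to force $\#\varphi_{A_i}^{-1}(\mathbf{d})=1$, then finish with Lemma~\ref{Lema Glue1}.

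For the forward direction you take a different, more elementary route than the paper. The paper argues that unique presentation makes $\mathbf{d}$ Betti-minimal in $S$ (Corollary~\ref{Cor mUP1}), then invokes Theorem~\ref{Th Glue1} to identify $\mathrm{Betti}(S)\setminus\{\mathbf{d}\}$ with $\mathrm{Betti}(S_1)\cup\mathrm{Betti}(S_2)$, so Betti-minimality gives $\mathbf{d}-\mathbf{a}\notin S$ at once. You instead build two explicit factorizations $\mathbf{w}+\mathbf{p}$ and $\mathbf{w}+\mathbf{q}$ of $\mathbf{d}$ in $S$. The idea is fine and avoids Theorem~\ref{Th Glue1}, but the closing sentence ``This contradicts $\#\varphi_A^{-1}(\mathbf{d})=1$'' is not correct: you have already established $\#\varphi_A^{-1}(\mathbf{d})=2$, so merely producing two elements is no contradiction. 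One more use of Lemma~\ref{Lema Glue1} closes the gap: since $\mathbf{p},\mathbf{q}\in\mathbb{N}^{r_1}\setminus\{0\}$, both $\mathbf{w}+\mathbf{p}$ and $\mathbf{w}+\mathbf{q}$ have nonzero $A_1$-block, so neither lies in $\varphi_{A_2}^{-1}(\mathbf{d})$; by the lemma both must then lie in $\varphi_{A_1}^{-1}(\mathbf{d})$, and their distinctness contradicts $\#\varphi_{A_1}^{-1}(\mathbf{d})=1$. With that one-line patch your argument is complete, and it has the merit of not relying on the full description of $\mathrm{Betti}(S)$.
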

\begin{proof}
If $\mathbf{d}$ has unique presentation then, by Corollary \ref{Cor mUP1}, $\mathbf{d}$
belongs to $\text{Betti-minimal}(S)$.
So, $\mathbf{d} - \mathbf{a} \not\in S$ for every $\mathbf{a} \in \mathrm{Betti}(S) \setminus \{\mathbf{d}\}.$ Since $\mathbf{d} \not\in \mathrm{Betti}(S_1) \cup \mathrm{Betti}(S_2)$ (because $\mathbf{d}$ has unique factorization in $S_i, i \in\{ 1,2\}$), by Theorem \ref{Th Glue1}, $\mathrm{Betti}(S) \setminus \{\mathbf{d}\} = \mathrm{Betti}(S_1) \cup \mathrm{Betti}(S_2).$ Thus, we conclude that $\mathbf{d} - \mathbf{a} \not\in S$ for every $\mathbf{a} \in \mathrm{Betti}(S_1) \cup \mathrm{Betti}(S_2).$

Conversely, in view of Lemma \ref{Lema 1}, we deduce that $\mathbf{d}$ admits a unique factorization in $S_i,\ i\in\{1,2\},$ that is to say, $\varphi_{A_1}^{-1}(\mathbf{d}) = \{ \mathbf{u} \}$ and $\varphi_{A_2}^{-1}(\mathbf{d}) = \{ \mathbf{v} \}.$ Since by Lemma \ref{Lema Glue1} we have that $\mathbf{d}$ is a Betti element, we conclude that $\varphi^{-1}_A(\mathbf{d}) = \{(\mathbf{u},0), (0, \mathbf{v})\}.$
\end{proof}

\begin{theorem}\label{Th Glue2}
Let $S$ be the gluing of $S_1$ and $S_2$,
and $G(S_1) \cap G(S_2) = \mathbf{d} \mathbb{Z}.$ Then, $S$ is uniquely presented if, and only if,
\begin{enumerate}[(a)]
\item $S_1$ and $S_2$ are uniquely presented,
\item $\pm (\mathbf{d} - \mathbf{a}) \not\in S,$ for every $\mathbf{a} \in \mathrm{Betti}(S_1) \cup \mathrm{Betti}(S_2),$
\end{enumerate}
\end{theorem}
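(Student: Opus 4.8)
The plan is to reduce the whole statement to the Betti-element criterion for unique presentedness. By Corollary \ref{Cor mUP1} and Corollary \ref{Cor mUP2}, a monoid is uniquely presented exactly when every one of its Betti elements has precisely two factorizations; and by Theorem \ref{Th Glue1}, $\mathrm{Betti}(S)=\mathrm{Betti}(S_1)\cup\mathrm{Betti}(S_2)\cup\{\mathbf{d}\}$. Hence it suffices to control the number of factorizations in $S$ of the three types of Betti element. The element $\mathbf{d}$ is already covered by Corollary \ref{Cor Glue1}: $\mathbf{d}$ has unique presentation in $S$ if and only if $\mathbf{d}-\mathbf{a}\notin S$ for every $\mathbf{a}\in\mathrm{Betti}(S_1)\cup\mathrm{Betti}(S_2)$, which is one half of condition (b); and since $\mathbf{0}\in S$, taking $\mathbf{a}=\mathbf{d}$ shows that this half of (b) forces $\mathbf{d}\notin\mathrm{Betti}(S_1)\cup\mathrm{Betti}(S_2)$. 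So it remains only to understand the Betti elements of $S_1$ and $S_2$, and by symmetry I would treat $\mathbf{b}\in\mathrm{Betti}(S_1)$.

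The key step is the claim that, for any $\mathbf{b}\in S_1$, we have $\#\varphi_A^{-1}(\mathbf{b})=\#\varphi_{A_1}^{-1}(\mathbf{b})$ if $\mathbf{b}-\mathbf{d}\notin S$, and $\#\varphi_A^{-1}(\mathbf{b})>\#\varphi_{A_1}^{-1}(\mathbf{b})$ if $\mathbf{b}-\mathbf{d}\in S$ (and symmetrically for $\mathbf{b}\in S_2$). For the first assertion I would argue as in Lemma \ref{Lema Glue1}: if some $\bar{\mathbf{w}}=(\bar w_1,\dots,\bar w_r)\in\varphi_A^{-1}(\mathbf{b})$ had $\bar w_i\neq0$ for an index $i>r_1$, then $\sum_{i=r_1+1}^{r}\bar w_i\mathbf{a}_i=\mathbf{b}-\sum_{i=1}^{r_1}\bar w_i\mathbf{a}_i$ lies in $S_2\cap\big(G(S_1)\cap G(S_2)\big)=S_2\cap\mathbf{d}\mathbb{Z}$, and freeness of units makes it equal to $z\mathbf{d}$ with $z\geq1$; then $\mathbf{b}-\mathbf{d}=\sum_{i=1}^{r_1}\bar w_i\mathbf{a}_i+(z-1)\mathbf{d}\in S$, unless $\sum_{i=1}^{r_1}\bar w_i\mathbf{a}_i=\mathbf{0}$ and $z=1$, in which case $\mathbf{b}=\mathbf{d}$ and again $\mathbf{b}-\mathbf{d}=\mathbf{0}\in S$; either way this contradicts $\mathbf{b}-\mathbf{d}\notin S$. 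So every factorization of $\mathbf{b}$ in $S$ is of the form $(\mathbf{w},0)$ with $\mathbf{w}\in\varphi_{A_1}^{-1}(\mathbf{b})$. For the second assertion, if $\mathbf{b}-\mathbf{d}=\mathbf{c}\in S$ has a factorization $\bar{\mathbf{c}}$ and $\varphi_{A_2}(\mathbf{v})=\mathbf{d}$, then $\bar{\mathbf{c}}+(0,\mathbf{v})\in\varphi_A^{-1}(\mathbf{b})$ has a nonzero coordinate beyond $r_1$ (because $\mathbf{v}\neq\mathbf{0}$), hence differs from every $(\mathbf{w},0)$, so $\mathbf{b}$ has strictly more factorizations in $S$ than in $S_1$.

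With the claim in hand both implications follow. For the ``if'' direction, assume (a) and (b): then $\mathbf{d}$ has unique presentation in $S$ by Corollary \ref{Cor Glue1}, and for $\mathbf{b}\in\mathrm{Betti}(S_1)$ condition (b) gives $\mathbf{b}-\mathbf{d}\notin S$, so the claim yields $\#\varphi_A^{-1}(\mathbf{b})=\#\varphi_{A_1}^{-1}(\mathbf{b})=2$ by (a) and Corollary \ref{Cor mUP1} applied to $S_1$; since $\mathbf{b}\in\mathrm{Betti}(S)$ (Theorem \ref{Th Glue1}), Corollary \ref{Cor mUP1} shows $\mathbf{b}$ has unique presentation in $S$, and similarly for $\mathrm{Betti}(S_2)$, so $S$ is uniquely presented by Corollary \ref{Cor mUP2}. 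For the ``only if'' direction, assume $S$ is uniquely presented, so every element of $\mathrm{Betti}(S)$ has exactly two factorizations. Applied to $\mathbf{d}$ and combined with Corollary \ref{Cor Glue1}, this gives the first half of (b) and $\mathbf{d}\notin\mathrm{Betti}(S_1)\cup\mathrm{Betti}(S_2)$; for the other half, if $\mathbf{a}-\mathbf{d}\in S$ for some $\mathbf{a}\in\mathrm{Betti}(S_1)\cup\mathrm{Betti}(S_2)\subseteq\mathrm{Betti}(S)$, say $\mathbf{a}\in\mathrm{Betti}(S_i)$, the claim would force $\#\varphi_A^{-1}(\mathbf{a})>\#\varphi_{A_i}^{-1}(\mathbf{a})\geq2$, contradicting $\#\varphi_A^{-1}(\mathbf{a})=2$. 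Finally, for $\mathbf{b}\in\mathrm{Betti}(S_i)$ (so $\mathbf{b}\in\mathrm{Betti}(S)$ by Theorem \ref{Th Glue1}) the injection $\varphi_{A_i}^{-1}(\mathbf{b})\hookrightarrow\varphi_A^{-1}(\mathbf{b})$ together with $2\leq\#\varphi_{A_i}^{-1}(\mathbf{b})\leq\#\varphi_A^{-1}(\mathbf{b})=2$ forces $\#\varphi_{A_i}^{-1}(\mathbf{b})=2$, so by Corollary \ref{Cor mUP1} each such $\mathbf{b}$ has unique presentation in $S_i$; this is (a).

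I expect the main obstacle to be the forward half of the claim: showing that a Betti element of $S_1$ can acquire a new factorization in $S$ only by absorbing a copy of $\mathbf{d}$, which is precisely the gluing mechanism isolated in Lemma \ref{Lema Glue1}, while being careful about the degenerate case $\mathbf{b}=\mathbf{d}$. The remaining work is bookkeeping, mainly keeping track of which half of condition (b) plays which role: $\mathbf{d}-\mathbf{a}\notin S$ secures the unique presentation of $\mathbf{d}$ itself through Corollary \ref{Cor Glue1}, whereas $\mathbf{a}-\mathbf{d}\notin S$ (equivalently, the Betti-minimality in $S$ of the Betti elements of $S_1$ and $S_2$) prevents those elements from gaining extra factorizations.
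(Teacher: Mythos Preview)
Your proof is correct and follows essentially the same approach as the paper: both rely on Theorem \ref{Th Glue1} to reduce to the three types of Betti elements, invoke Corollary \ref{Cor Glue1} for $\mathbf{d}$, and use the gluing relation $G(S_1)\cap G(S_2)=\mathbf{d}\mathbb{Z}$ to show that a Betti element of $S_i$ can acquire an extra factorization in $S$ only if $\mathbf{a}-\mathbf{d}\in S$. The only organizational difference is that you isolate this last point as an explicit two-way claim about factorization counts, whereas the paper embeds the argument directly and, in the forward direction, derives $\mathbf{a}-\mathbf{d}\notin S$ from Betti-minimality (via Corollary \ref{Cor mUP1}) rather than from the factorization-count argument.
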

\begin{proof}
By Theorem \ref{Th Glue1}, $\mathrm{Betti}(S) = \mathrm{Betti}(S_1) \cup \mathrm{Betti}(S_2) \cup \{\mathbf{d}\}.$ So, if $S$ is uniquely presented, then every $\mathbf{a} \in \mathrm{Betti}(S_1) \cup \mathrm{Betti}(S_2) \cup \{\mathbf{d}\}$ has unique presentation. Thus, $S_1$ and $S_2$ are uniquely presented and, by Corollary \ref{Cor Glue1}, $\mathbf{d} - \mathbf{a} \not\in S,$ for every $\mathbf{a} \in \mathrm{Betti}(S_1) \cup \mathrm{Betti}(S_2).$ Finally, since, by Corollary \ref{Cor mUP1}, every $\mathbf{a} \in \mathrm{Betti}(S)$ is Betti-minimal, we conclude that $\mathbf{a} - \mathbf{d} \not\in S,$ for every $\mathbf{a} \in \mathrm{Betti}(S_1) \cup \mathrm{Betti}(S_2)$ (note that $\mathbf{d} - \mathbf{m} \not\in S$ implies $\mathbf{d} \neq \mathbf{m},$ for every $\mathbf{m} \in \mathrm{Betti}(S_1) \cup \mathrm{Betti}(S_2)$).

Conversely, suppose that Conditions (a) and (b) hold. In particular, every $\mathbf{a} \in \mathrm{Betti}(S_i)$ has only two factorizations as element of $S_i,\ i\in\{1,2\}$ and, by Corollary \ref{Cor Glue1}, $\mathbf{d}$ has only two factorizations in $S,$ say $\mathbf{d} = \sum_{i = 1}^{r_1} u_i \mathbf{a}_i = \sum_{i = r_1 + 1}^r v_i \mathbf{a}_i.$ So, if $\mathbf{a} \in \mathrm{Betti}(S)$ has more than two factorizations in $S,$ then $\mathbf{d} \neq \mathbf{a} \in \mathrm{Betti}(S_1) \cup \mathrm{Betti}(S_2).$ If $\mathbf{a} \in \mathrm{Betti}(S_1),$ then $\mathbf{a} = \sum_{i = 1}^{r_1} w_i \mathbf{a}_i + \sum_{i = r_1 + 1}^r c_i \mathbf{a}_i,$ with $w_i \neq 0,$ for some $r_1 + 1 \leq i \leq r.$ Thus, $\mathbf{a} - \sum_{i = 1}^{r_1} w_i \mathbf{a}_i = \sum_{i = r_1 + 1}^r w_i \mathbf{a}_i \in G(S_1) \cap G(S_2) = \mathbf{d} \mathbb{Z}$ and thus, $\mathbf{a} - \mathbf{d} \in S$ which is impossible by hypothesis.
\end{proof}

The affine semigroup in the following example is borrowed from \cite{RGS99a} where the authors use it to illustrate their algorithm for checking freeness of simplicial semigroups. We use $\mathbf{e}_i\in \mathbb N^r$ to denote the $i$th row of the identity $r\times r$ matrix.

\begin{example}
Let us see that $S = \langle (2,0), (0,3), (2,1), (1,2) \rangle$ is uniquely presented. On the one hand, by taking $A_1 = \{(2,0), (0,3),$ $(2,1)\}, A_2 = \{(1,2) \}, S_1 = \langle A_1 \rangle$ and $S_2 = \langle A_2 \rangle,$ we have that $G(S_1) \cap G(S_2) = 2(1,2) \mathbb{Z}.$ On the other hand, by taking $A_{11} = \{(2,0), (0,3)\}, A_{12} = \{(2,1)\}, S_{11} = \langle A_{11} \rangle$ and $S_{12} = \langle A_{12} \rangle,$ we have that $G(S_{11}) \cap G(S_{12}) = 3(2,1) \mathbb{Z}.$ Since $S_{11} \cong \mathbb{N}^2$ and $S_{12} \cong \mathbb{N}$ are uniquely presented (because, their corresponding presentations are the empty set) and Condition (b) in Theorem \ref{Th Glue2} is trivially satisfied, we may assure that $S_1$ is uniquely presented by $\{(3 \mathbf{e}_3, 3 \mathbf{e}_1 + \mathbf{e}_2 )\}.$ Finally, since $S_1$ and $S_2 \cong \mathbb{N}$ are uniquely presented and $2(1,2) - 3(2,1) \not\in S$ we conclude that $S$ is uniquely presented by $\{(3 \mathbf{e}_3 ,3 \mathbf{e}_1 + \mathbf{e}_2 ),\ (2 \mathbf{e}_4, \mathbf{e}_2 + \mathbf{e}_3)\}.$
\end{example}

\begin{example}\label{Ejem Glue1}
In this example we construct an infinite sequence of uniquely presented numerical semigroups. Let us start with $S_1=\langle 2,3\rangle,$ and given $S_i$ minimally generated by $\{ a_1,\ldots,a_{i+1}\},\ i \geq 2,$ set $S_{i+1}=\langle 2a_1,a_1+a_2, 2a_2,\ldots, 2a_{i+1}\rangle.$ We prove by induction on $i$ that $S_{i+1}$ is uniquely presented by
\begin{align*}
\rho_{i+1} = \big\{ (2\mathbf{e}_2, \mathbf{e}_1+\mathbf{e}_3),(2\mathbf{e}_3,\mathbf{e}_1+\mathbf{e}_4),\ldots, (2\mathbf{e}_i,\mathbf{e}_1+\mathbf{e}_i),(2\mathbf{e}_{i+1},3\mathbf{e}_1) \big\}.
\end{align*}

For $i=1$ the result follows easily. Assume that $i \geq 2$ and that the result holds for $S_i$ and let us show it for $S_{i+1}.$ Observe that $S_{i+1}$ is the gluing of $\langle 2a_1,\ldots,2a_{i+1}\rangle=2S_i$ and $\langle a_1+a_2\rangle,$ with $d=2a_1+2a_2,$ and consequently $S_{i+1}$ is minimally generated by $\{ 2a_1,a_1+a_2, 2a_2,\ldots, 2a_{i+1}\}$ (see Lemma 9.8 in \cite{RGS09} with $\lambda=2$ and $\mu=a_1+a_2$). Notice that
${\rm Betti}(\langle a_1+a_2\rangle)=\varnothing$ and, by induction hypothesis, ${\rm Betti}(2S_i)=2{\rm Betti}(S_i)=\{2(2a_2),\ldots,2(2a_{i+1})\}.$ Thus, by Theorem \ref{Th Glue1}, $$\mathrm{Betti}(S_{i+1}) = \{d\} \cup \mathrm{Betti}(2 S_i) = \{ 2a_1+2a_2,2(2a_2),\ldots,2(2a_{i+1})\}.$$ Now, a direct computation shows that $\rho_{i+1}$ is a minimal presentation of $S_{i+1}.$

In view of Theorem \ref{Th Glue2}, to prove the uniqueness of the presentation, it suffices to check that for $b=2(2a_j)-(2a_1+2a_2),$ neither $b$ nor $-b$ belong to $S_{i+1}.$ Observe that $-b < 0,$ since $j \geq 2,$ and thus it is not in $S_{i+1}.$ Besides, if $j \neq i,$ then $2(2a_j)-(2a_1+2a_2) = 2a_1 + 2a_{j+1} - 2a_1 - 2 a_2 = 2 a_{j+1} - 2a_2.$ This element cannot be in $S_{i+1}$ because $2a_{j+1}$ is one of its minimal generators. For $j=i,$ we get $2(2a_{i+1})-(2a_1+2a_2)=2(3a_1)-2a_1-2a_2=2(2a_1)-2a_2.$ If this integer belongs to $S_{i+1},$ then by the minimality of $2a_2,$ there exists $a \in S_{i+1}\setminus\{0\}$ such that $2(2a_1)=2a_2 + a.$ But then $a \geq 2a_1,$ and as $2a_2>2a_1,$ we get a contradiction.

For every positive integer $i,$ the numerical semigroup $S_{i+1}$ is a free numerical semigroup in the sense of \cite{Bertin}, and thus it is a complete intersection (numerical semigroup with minimal presentations with the least possible cardinality: the embedding dimension minus one). Some authors call these semigroups telescopic. Not all free numerical semigroups have unique minimal presentation; $\langle 4,6,21\rangle$ illustrates this fact (see Example \ref{4621}).
\end{example}

\section{Uniquely presented numerical semigroups}\label{Sect NS}

We would like to mention that there are ``few'' numerical semigroups having unique minimal presentation. The following sequences have been computed with the {\tt numericalsgps} {\tt GAP} package (\cite{numericalsgps}). The first contains in the $i$th position the number of numerical semigroups with \emph{Frobenius number} $i\in\{1,\ldots, 20\}$ (meaning that $i$ is the largest integer not in the semigroup), and the second contains those with the same condition having a unique minimal presentation.
$$( 1, 1, 2, 2, 5, 4, 11, 10, 21, 22, 51, 40, 106, 103, 200, 205, 465, 405, 961, 900 ),$$
$$ (1, 1, 1, 1, 3, 1, 5, 2, 5, 4, 8, 2, 12, 8, 6, 9, 17, 8, 20, 12).$$

Next we explore three big families of numerical semigroups, and determine its elements having unique minimal presentations.

\subsection{Numerical semigroups generated by intervals}

Let $a$ and $x$ be two positive integers, and let $S=\langle a, a+1,\ldots, a+x\rangle.$ Since $\mathbb{N}$ is uniquely presented, we may assume that $2\leq a.$ In order that $\{a,\ldots,a+x\}$ becomes a minimal system of generators for $S,$ we suppose that $x<a.$

\begin{theorem}
$S=\langle a,a+1,\ldots,a+x\rangle$ ($x<a$) is uniquely presented if, and only if, either $a=1,$ (that is, $S = \mathbb{N}$) or $x=1,$ or $x=2,$ or $x=3$ and $(a-1)\bmod x\neq 0.$
\end{theorem}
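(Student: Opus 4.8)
My plan is to use Corollary \ref{Cor mUP1}: $S$ is uniquely presented precisely when every element of $\mathrm{Betti}(S)$ has exactly two factorizations. So I would first record a combinatorial description of factorizations in $S=\langle a,a+1,\dots,a+x\rangle$, then pin down $\mathrm{Betti}(S)$, and finally count factorizations of the Betti elements case by case.

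\emph{Setup.} If $a=1$ then $S=\mathbb N$ (empty minimal presentation), and if $x=1$ then $S$ has embedding dimension $2$ (Example \ref{Ex ED2}); so assume $2\le x<a$. Writing $\mathbf u=(u_0,\dots,u_x)$ for the factorization $\sum_i u_i(a+i)$, its \emph{length} is $|\mathbf u|=\sum_i u_i$ and $\varphi_A(\mathbf u)=|\mathbf u|\,a+\sum_i iu_i$; hence the length-$n$ factorizations of $s\in S$ are exactly the multisets of $n$ ``weights'' in $\{0,\dots,x\}$ summing to $s-na$, and these exist iff $na\le s\le n(a+x)$. Two factorizations are $\mathcal R$-related in one step iff their supports meet. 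Two facts drive the argument: (i) if $n\ge 3$, all length-$n$ factorizations of a fixed $s$ form a single $\mathcal R$-class, since replacing the least and greatest weights $c<c'$ in a factorization by $c+1,c'-1$ keeps a common ``middle'' weight, stays in the class, strictly reduces the spread, and terminates at the unique nearly-constant length-$n$ factorization; (ii) if $2\le m<a$, then $2a+m$ has only length-$2$ factorizations, namely the $p(m):=\lfloor m/2\rfloor-\max(0,m-x)+1$ pairs $\{i,m-i\}$ with $0\le i\le m-i\le x$, and distinct pairs share no weight, so $2a+m\in\mathrm{Betti}(S)$ iff $p(m)\ge 2$ and then it has exactly $p(m)$ factorizations.

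\emph{Necessity.} If $x\ge 4$ then $a\ge 5$, so $2a+4<3a$, and by (ii) $p(4)=3$; hence $2a+4$ is a Betti element with three factorizations, so by Corollary \ref{Cor mUP1} it lacks unique presentation and $S$ is not uniquely presented. If $x=3$ and $a=3c+1$, consider $t=(c+2)a$; a length count gives that every factorization of $t$ has length $c+1$ or $c+2$, the unique one of length $c+2$ being $\big((c+2),0,0,0\big)$, and those of length $c+1$ being exactly the two multisets $\{1,3,\dots,3\}$ (with $c$ threes) and $\{2,2,3,\dots,3\}$ (with $c-1$ threes), neither of which uses the generator $a$. So $t$ is a Betti element with exactly three factorizations, again lacking unique presentation, and $(a-1)\bmod 3=0$ is exactly this case.

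\emph{Sufficiency.} For $a=1$, $x\in\{1,2\}$, and $x=3$ with $a\not\equiv1\pmod3$ I must show every Betti element has exactly two factorizations. The crux is to prove $\mathrm{Betti}(S)\subseteq\{\,2a+m:2\le m\le 2x\,\}\cup\{\,k_0a+j:0\le j<x\,\}$, where $k_0=\lceil a/x\rceil+1$: using (i) together with a description of when a length-$\ell$ factorization is $\mathcal R$-linked to a length-$(\ell-1)$ one (which fails exactly when the generator $a$ cannot be freed from the shorter factorization), one shows that every element outside these two ranges has all its factorizations $\mathcal R$-connected. Then one counts, with $d:=\lceil a/x\rceil x-a$. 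For $x=2$ the only $2a+m$ with $p(m)\ge 2$ is $m=2$ ($p(2)=2$), and since $d\le 1$ every Betti element $k_0a+j$ has exactly one factorization of length $k_0$ and one of length $k_0-1$. For $x=3$ one has $p(2)=p(3)=p(4)=2$ and $p(5)=p(6)=1$, and $a\not\equiv1\pmod3$ is exactly the condition $d\le 1$: the length-$k_0$ factorizations of $k_0a+j$ correspond to partitions of $j$ and the length-$(k_0-1)$ ones to partitions of $d-j$, so when $d\le1$ each relevant $k_0a+j$ again has exactly two factorizations; and the elements $2a+m$ with $a\le m\le 2x$ (which occur only if $a\le 2x$) contribute at most $p(m)\le1$ pair plus one shorter factorization. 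Hence every Betti element has exactly two factorizations, and $S$ is uniquely presented.

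\emph{Main difficulty.} The hardest step is the inclusion $\mathrm{Betti}(S)\subseteq\{2a+m\}\cup\{k_0a+j\}$, i.e.\ excluding Betti elements ``in the middle'' or ``above'' these ranges, in particular Betti elements all of whose factorizations have length at least $3$; this requires a careful analysis of $\mathcal R$-connectivity in the factorization poset, or can be imported from the known description of the minimal presentations of numerical semigroups generated by intervals. One must also handle separately the small cases $a\le 2x$, where the two ranges overlap.
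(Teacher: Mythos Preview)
Your plan is sound and takes a genuinely different route from the paper. The paper imports the full description of $\mathrm{Betti}(S)$ and the cardinality of a minimal presentation from \cite[Theorem 8]{intervalos}, and then applies Corollary~\ref{Cor mUP2}: it verifies, using the membership criterion \cite[Lemma 1]{intervalos}, that the listed Betti elements are pairwise $\prec_S$-incomparable (hence all Betti-minimal) and that their number matches the cardinality of a minimal presentation. You instead use Corollary~\ref{Cor mUP1} and aim to count the factorizations of each Betti element directly via your weight-partition description. Your necessity arguments are correct and in fact use different witnesses than the paper: for $x\ge 4$ the paper also uses $2(a+2)$, but for $x=3$, $a\equiv 1\pmod 3$ the paper shows that $(q+1)(a+3)$ dominates $2(a+1)$ under $\prec_S$, whereas you exhibit $(q+2)a$ as a Betti element with three factorizations. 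Both are valid. What your approach buys is a more self-contained combinatorial picture of factorizations; what the paper's approach buys is that, once $\mathrm{Betti}(S)$ is imported, the verification reduces to a handful of easy ``is this difference in $S$?'' checks rather than full factorization counts.

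There is, however, a genuine gap in your sufficiency direction that you yourself flag: the inclusion $\mathrm{Betti}(S)\subseteq\{2a+m\}\cup\{k_0a+j\}$. Your sketch of how to obtain it (via $\mathcal R$-connectivity between consecutive lengths, fact~(i), and the ``freeing the generator $a$'' idea) is plausible but not carried out, and the linking of length-$\ell$ to length-$(\ell-1)$ classes for general $\ell$ is exactly where the work lies. If you fall back on importing the Betti description from \cite{intervalos}, you are then on equal footing with the paper, and at that point the paper's incomparability check is shorter than your factorization count: for instance, for $x=3$ and $d=1$ your outline still has to argue separately that $k_0a+2$ has its two length-$k_0$ factorizations in a single $\mathcal R$-class (so is not Betti), and you must treat the overlap range $a\le 2x$ by hand. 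None of this is wrong, but it is more bookkeeping than the paper's route once the same external input is granted.
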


\begin{proof}
The Betti elements in $S$ are fully described in \cite[Theorem 8]{intervalos}, so we will make an extensive use of this result. If $x \geq 4,\ m = 2(a+2)$ is a Betti element and $\# \varphi_A^{-1}(m) = 3.$ Thus for $x \geq 4,$ $S$ is not uniquely presented. Hence we focus on $x \in \{1,2,3\}.$ For simplicity in the forthcoming notation, let $q$ and $r$ be the quotient and the remainder in the division of $a-1$ by $x,$ that is to say, $a = xq + r + 1$ with $0 \leq r \leq x-1.$ Notice that $x < a$ implies $q \geq 1.$

For $x=1,$ we get an embedding dimension two numerical semigroup which is uniquely presented (see Example \ref{Ex ED2}).

For $x=2,$
\[ \mathrm{Betti}(S) = \left\{
\begin{array}{ll}
\{2(a+1), qa + 2(q-1) + 1, qa + 2(q-1) + 2\}, & r=0,\\
\{2(a+1), qa + 2(q-1) + 2\}, & r=1.
\end{array}\right.
\]
Since the cardinality of a minimal presentation of $S$ is $3 - r$ (\cite[Theorem 8]{intervalos}), by Corollary \ref{Cor mUP2}, we only must check whether or not they are incomparable with respect to $\prec_S.$ If $r = 0,$ clearly $qa + 2(q-1) + 1$ and $qa + 2(q-1) + 2$ are incomparable, since $1 \not\in S.$ Besides, $qa + 2(q-1) + 1 - 2(a+1) = (q-1)a + 2q - 1\not\in S$ in view of \cite[Lemma 1]{intervalos} ($2q-1>2(q-1)$), and the same argument applies to $qa + 2(q-1) + 2 - 2(a+1) = (q-1)a+2q.$ If $r=1,$ $qa+2(q-1)+2 - 2(a+1) = (q-2)a + 2(q-1) \not\in S$ (use again \cite[Lemma 1]{intervalos}), we also obtain a (complete intersection) uniquely presented numerical semigroup. Hence every numerical semigroup of the form $\langle a, a+1, a+2 \rangle,$ with $a \geq 3,$ is uniquely presented.

Assume that $x=3$ (and thus $a\geq 4$).
\begin{itemize}
\item[$r=0.$] In this setting, both $(q+1)(a+3)$ and $2(a+1)$ are Betti elements. However, $(q+1)(a+3)-2(a+1)=(q-1)a+q 3+1=(q-1)a+(a-1)+1=q a\in S.$ Hence $(q+1)(a+3) \not\in \text{Betti-minimal}(S)$ and so, by Corollary \ref{Cor mUP2}, it is not uniquely presented.
\item[$r\neq 0.$] In this case,
  \[\mathrm{Betti}(S) = \left\{ \begin{array}{lll}
\begin{array}{l} \big\{2(a+1),\ (a+1)+(a+2),2(a+2),\\ \phantom{\{} qa + 3(q-1) + 2,\ qa + 3(q-1) + 3 \big\} \end{array} & \text{if} & r=1,\\ \begin{array}{l} \big\{2(a+1),\ (a+1)+(a+2),\\ \phantom{\{} 2(a+2),\ qa + 3(q-1) + 3 \big\}\end{array} & \text{if} & r=2.
  \end{array}\right.\]
   Since the cardinality of a minimal presentation of $S$ is $6 - r$ (\cite[Theorem 8]{intervalos}), by Corollary \ref{Cor mUP2}, we only must check whether or not they are incomparable with respect to $\prec_S.$ Observe that $q a+(q-1)3+j-2a-i=(q-2)a+(q-1)3+j-i\not \in S$ if and only if $q+j+1>i$ (\cite[Lemma 1]{intervalos}). As in our case $i\in \{2,3,4\},$ $j\in\{2,3\}$ and $q\geq 1,$ we obtain that these elements are incomparable. Thus, $S$ is uniquely presented.
\end{itemize}
\end{proof}

\subsection{Embedding dimension three numerical semigroups}

As we have pointed out above, the Frobenius number of a numerical semigroup is the largest integer not belonging to it. A numerical semigroup $S$ with Frobenius number $f$ is \emph{symmetric} if for every $x\in \mathbb Z\setminus S,$ $f-x\in S.$ For embedding dimension three numerical semigroups it is well-known that the concept of symmetric and complete intersection numerical semigroups coincide (and also free, see for instance \cite[Chapter 10]{RGS09} or \cite{Herzog70}). Non-symmetric numerical semigroups with embedded dimension three are uniquely presented (\cite{Herzog70}). Thus, we will center our attention in the symmetric case, which is the free case, and as Delorme proved in \cite{Delorme}, these semigroups are the gluing of an embedding dimension two numerical semigroup and $\mathbb N$ (see \cite{Ros97} for a proof using the concept of gluing).
So every symmetric numerical semigroup with embedding dimension three can be described as follows.

\begin{proposition}{\cite[Theorem 10.6]{RGS09}}\label{libres-de-tres}
Let $m_1$ and $m_2$ two relatively prime integers greater than one. Let $a, b$ and $c$ be nonnegative integers with $a \geq 2,\ b + c \geq 2$ and $\gcd(a, b m_1 + c m_2) = 1.$ Then $S = \langle a m_1, a m_2, b m_1 + c m_2 \rangle$ is a symmetric numerical semigroup with embedding dimension three. Moreover, every embedding dimension three symmetric numerical semigroup is of this form.
\end{proposition}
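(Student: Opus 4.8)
The plan is to prove both implications through the theory of gluings recalled in Section~\ref{Sect Glue}, and for the converse to lean on the classical fact that, in embedding dimension three, being symmetric is equivalent to being a complete intersection and to being free (see \cite{Herzog70} or \cite[Chapter 10]{RGS09}).

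For the sufficiency I would write $S_1 = \langle m_1, m_2 \rangle$ and $k = b m_1 + c m_2$, so that $S = \langle a m_1, a m_2, k \rangle$. Since $\gcd(a m_1, a m_2, k) = \gcd(a, k) = 1$, $S$ is a numerical semigroup. I would first show that $S$ is the gluing of $\langle a m_1, a m_2 \rangle$ and $\langle k \rangle$ with $\mathbf{d} = a k$: indeed $G(\langle a m_1, a m_2 \rangle) = a\mathbb{Z}$ and $G(\langle k \rangle) = k\mathbb{Z}$, so their intersection is $\mathrm{lcm}(a,k)\mathbb{Z} = a k\, \mathbb{Z}$ because $\gcd(a,k) = 1$; and $a k \in \langle a m_1, a m_2 \rangle \cap \langle k \rangle \setminus \{\mathbf{0}\}$, since $a k = b (a m_1) + c (a m_2) = a \cdot k$. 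As $b + c \geq 2$ and $a \geq 2$, the element $\mathbf{d} = a k$ is a minimal generator of neither $\langle a m_1, a m_2 \rangle$ nor $\langle k \rangle$, and the three generators are pairwise distinct ($a m_1 \neq a m_2$ since $m_1 \neq m_2$, while $k \neq a m_1, a m_2$ since $a \nmid k$); hence, by the description of the minimal system of generators of a gluing (\cite{Ros97}), $\{a m_1, a m_2, k\}$ is the minimal system of generators of $S$ and $S$ has embedding dimension three. Finally, $\langle m_1, m_2 \rangle$ and $\mathbb{N}$ are complete intersections (their minimal presentations have cardinalities $1$ and $0$), and by \cite[Theorem 1.4]{Ros97} a minimal presentation of $S$ is obtained by adjoining a single relation to the union of minimal presentations of the two components, so $S$ has a minimal presentation of cardinality $1 + 0 + 1 = 3 - 1$; thus $S$ is a complete intersection, hence symmetric. (Equivalently, one may invoke that a gluing of numerical semigroups is symmetric exactly when both components are, together with the Frobenius number formula for gluings.)

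For the necessity I would take $S$ symmetric of embedding dimension three. By \cite{Herzog70} (see also \cite[Chapter 10]{RGS09}), $S$ is free in the sense of \cite{Bertin}, and by Delorme's description of free numerical semigroups \cite{Delorme} (see also \cite{Ros97}), $S$ is the gluing of an embedding dimension two numerical semigroup and $\mathbb{N}$. Unwinding that gluing, $S = a\langle m_1, m_2 \rangle + k\mathbb{N} = \langle a m_1, a m_2, k \rangle$, where $\langle m_1, m_2 \rangle$ has embedding dimension two (so $\gcd(m_1, m_2) = 1$ and $m_1, m_2 > 1$), where $a \geq 2$ (being, in the gluing with $\mathbb{N}$, an element of $\mathbb{N}$ that is not a minimal generator of $\mathbb{N}$), where $k$ lies in $\langle m_1, m_2 \rangle$ but is not a minimal generator of it, and where $\gcd(a, k) = 1$. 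Writing $k = b m_1 + c m_2$ with $b, c \in \mathbb{N}$, the fact that $k \notin \{0, m_1, m_2\}$ forces $b + c \geq 2$ for every such representation, and one arrives at exactly the stated presentation.

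The genuinely substantive step is the sufficiency part, and within it the verification that the gluing takes place along $\mathbf{d} = a k$ and that the embedding dimension does not collapse to two; both become straightforward once the hypotheses $a \geq 2$, $b + c \geq 2$ and $\gcd(a, k) = 1$ are fed into the standard gluing lemmas, so the real work there is careful bookkeeping rather than any new idea. The necessity direction is purely a repackaging of the cited structure theorems and requires no computation.
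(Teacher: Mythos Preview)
The paper does not give its own proof of this proposition: it is quoted verbatim from \cite[Theorem 10.6]{RGS09}, and the paragraph preceding it merely sketches the necessity direction (symmetric $\Leftrightarrow$ free in embedding dimension three, and free numerical semigroups are gluings of an embedding dimension two numerical semigroup with $\mathbb{N}$, via \cite{Herzog70}, \cite{Delorme}, \cite{Ros97}). Your argument is correct and is exactly the fleshing-out of that sketch through the gluing machinery of Section~\ref{Sect Glue}: you identify $S$ as the gluing of $a\langle m_1,m_2\rangle$ and $k\mathbb{N}$ along $\mathbf{d}=ak$, use the hypotheses $a\geq 2$, $b+c\geq 2$, $\gcd(a,k)=1$ to secure embedding dimension three, and read off complete-intersection (hence symmetry) from the presentation count in \cite[Theorem 1.4]{Ros97}. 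One small remark: the result you invoke as ``the description of the minimal system of generators of a gluing (\cite{Ros97})'' is more precisely \cite[Lemma 9.8]{RGS09} (the paper itself uses it in Example~\ref{Ejem Glue1}); your phrasing about $\mathbf{d}=ak$ not being a minimal generator of either factor is equivalent to the hypotheses of that lemma, so the argument goes through, but citing it directly would tighten the write-up.
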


Now, our main result is just a particularization of what we have already seen in Section \ref{Sect Glue}.

\begin{theorem}\label{Th SS5.2}
With the same notation as in Proposition \ref{libres-de-tres}, $S$ is a symmetric numerical semigroup uniquely presented with embedding dimension three, if and only if, $0 < b < m_2$ and $0 < c < m_1.$
\end{theorem}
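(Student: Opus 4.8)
The plan is to recognize $S = \langle a m_1, a m_2, b m_1 + c m_2 \rangle$ as a gluing and apply Theorem \ref{Th Glue2} directly. Write $S_1 = \langle a m_1, a m_2 \rangle$ and $S_2 = \langle b m_1 + c m_2 \rangle \cong \mathbb{N}$. Since $\gcd(m_1, m_2) = 1$, we have $G(S_1) = a \mathbb{Z}$ and $G(S_2) = (b m_1 + c m_2) \mathbb{Z}$, so $G(S_1) \cap G(S_2) = \mathbf{d}\mathbb{Z}$ with $\mathbf{d} = a(b m_1 + c m_2) = m_1(a m_1)\cdot\frac{b}{m_1}$... more precisely $\mathbf{d} = a(bm_1 + cm_2)$, which lies in $S_2$ obviously and in $S_1$ because $\gcd(a, bm_1+cm_2)=1$ forces $bm_1 + cm_2 \equiv 0$ appropriately — in fact $\mathbf{d} = b(am_1) + c(am_2) \in S_1$ directly. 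So $S$ is the gluing of $S_1$ and $S_2$ along $\mathbf{d}$. Since $S_2 \cong \mathbb{N}$ has empty Betti set, and $S_1$ is an embedding dimension two numerical semigroup hence uniquely presented by Example \ref{Ex ED2}, Condition (a) of Theorem \ref{Th Glue2} is automatic.

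The crux is therefore Condition (b): $\pm(\mathbf{d} - \mathbf{a}) \notin S$ for every $\mathbf{a} \in \mathrm{Betti}(S_1) \cup \mathrm{Betti}(S_2) = \mathrm{Betti}(S_1)$. Now $S_1 = \langle a m_1, a m_2 \rangle = a\langle m_1, m_2 \rangle$, and $\langle m_1, m_2 \rangle$ has a unique Betti element $m_1 m_2$ with the two factorizations $(m_2, 0)$ and $(0, m_1)$; hence $\mathrm{Betti}(S_1) = \{ a m_1 m_2 \}$. So Condition (b) reduces to the single requirement that neither $\mathbf{d} - a m_1 m_2$ nor $a m_1 m_2 - \mathbf{d}$ lies in $S$, where $\mathbf{d} = a(b m_1 + c m_2)$. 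Thus $\mathbf{d} - a m_1 m_2 = a(b m_1 + c m_2 - m_1 m_2)$ and $a m_1 m_2 - \mathbf{d} = a(m_1 m_2 - b m_1 - c m_2)$.

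The heart of the argument is then to show that the condition "$0 < b < m_2$ and $0 < c < m_1$" is equivalent to "$a(bm_1 + cm_2 - m_1 m_2) \notin S$ and $a(m_1 m_2 - bm_1 - cm_2) \notin S$." First I would dispose of the degenerate cases $b = 0$ or $c = 0$: if say $b = 0$, then $\mathbf{d} = a c m_2 = c (am_2) \in S_1$ and moreover $am_1 m_2 - \mathbf{d} = am_2(m_1 - c)$; one checks $c < m_1$ (forced since otherwise $bm_1 + cm_2 \geq m_1 m_2$ and one shows the generator is not minimal or $\gcd$ fails), whence $am_1 m_2 - \mathbf{d} \in S$, so $S$ is not uniquely presented — and symmetrically for $c = 0$. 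So assume $b, c \geq 1$. For the forward direction, if $b \geq m_2$ then $bm_1 + cm_2 - m_1 m_2 \geq cm_2 > 0$ and in fact $\mathbf{d} - am_1 m_2 = a(b - m_2)m_1 + acm_2 \in S_1 \subseteq S$, violating (b); symmetrically $c \geq m_1$ gives $\mathbf{d} - am_1 m_2 \in S$. For the converse, assuming $0 < b < m_2$ and $0 < c < m_1$: one has $0 < bm_1 + cm_2 < 2m_1 m_2$, and I would show that $am_1 m_2 - \mathbf{d}$, when positive, and $\mathbf{d} - am_1 m_2$, when positive, cannot lie in $S$. The main obstacle — and the one place requiring genuine care rather than bookkeeping — is ruling out membership in the full semigroup $S$ (which has the third generator $bm_1 + cm_2$ available), not merely in $S_1$. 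Here I would use that any $s \in S$ can be written $s = \alpha(am_1) + \beta(am_2) + \gamma(bm_1 + cm_2)$ and pass to residues modulo $a$: since $\gcd(a, bm_1+cm_2)=1$, the value of $\gamma \bmod a$ is determined by $s \bmod a$; choosing representatives with $0 \le \gamma < a$ reduces the question to whether $(s - \gamma(bm_1+cm_2))/a \in \langle m_1, m_2\rangle$, i.e. to a statement about the numerical semigroup $\langle m_1, m_2\rangle$ whose Frobenius number is $m_1 m_2 - m_1 - m_2$. A short case analysis on whether $\mathbf{d} \gtrless am_1 m_2$ (equivalently $bm_1 + cm_2 \gtrless m_1 m_2$), combined with the bounds $0 < b < m_2$, $0 < c < m_1$ and the Apéry/Frobenius description of $\langle m_1, m_2 \rangle$, then finishes both directions.
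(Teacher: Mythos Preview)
Your approach is essentially the paper's: recognize $S$ as the gluing of $S_1=\langle am_1,am_2\rangle$ and $S_2=\langle bm_1+cm_2\rangle$ along $\mathbf d=a(bm_1+cm_2)$, observe $\mathrm{Betti}(S_1)=\{am_1m_2\}$ and $\mathrm{Betti}(S_2)=\varnothing$, and then invoke Theorem~\ref{Th Glue2} to reduce everything to the single condition $\pm\big(a(bm_1+cm_2)-am_1m_2\big)\notin S$. The only substantive difference is in how that last membership question is settled: the paper isolates Lemma~\ref{tres} (the only nonnegative solutions of $m_1m_2=\alpha m_1+\beta m_2$ are $(m_2,0)$ and $(0,m_1)$) and applies it directly, whereas you propose reducing modulo $a$ (which indeed forces the coefficient of $bm_1+cm_2$ to be $0$, since the difference is a multiple of $a$ and $\gcd(a,bm_1+cm_2)=1$) and then appealing to the Frobenius/symmetry description of $\langle m_1,m_2\rangle$. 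Both routes are short and equivalent; your reduction is arguably cleaner because it explains transparently why the third generator cannot help. Two small rough spots to tidy: the claim ``$c<m_1$ is forced'' in the $b=0$ case is not needed---if $c\ge m_1$ then $\mathbf d-am_1m_2=am_2(c-m_1)\in S$ and condition~(b) still fails---and the ``short case analysis'' you defer at the end should be written out (it is genuinely short: after your mod-$a$ reduction, one just checks that $m_1m_2-bm_1-cm_2\notin\langle m_1,m_2\rangle$ via symmetry, since its complement $(b-1)m_1+(c-1)m_2$ is visibly in $\langle m_1,m_2\rangle$ when $b,c\ge 1$, and similarly for the other sign).
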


For the proof of this result, we will need the following lemma.

\begin{lemma}\label{tres}
Let $m_1$ and $m_2$ two relatively prime integers greater than one. Then, $m_1 m_2 = \alpha m_1 + \beta m_2,$ for some $\alpha \geq 0$ and $\beta \geq 0,$ if, and only if, $\alpha = m_2$ and $\beta = 0,$ or $\alpha = 0$ and $\beta = m_1.$
\end{lemma}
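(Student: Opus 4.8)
The plan is to prove the nontrivial implication ($\Rightarrow$), since the reverse implication is a trivial verification ($m_2 \cdot m_1 + 0 \cdot m_2 = m_1 m_2 = 0 \cdot m_1 + m_1 \cdot m_2$). Suppose $m_1 m_2 = \alpha m_1 + \beta m_2$ with $\alpha, \beta \geq 0$. Reducing modulo $m_1$, we get $0 \equiv \beta m_2 \pmod{m_1}$, and since $\gcd(m_1, m_2) = 1$ this forces $\beta \equiv 0 \pmod{m_1}$, i.e.\ $\beta = k m_1$ for some integer $k \geq 0$. Symmetrically, reducing modulo $m_2$ gives $\alpha = \ell m_2$ for some integer $\ell \geq 0$. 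Substituting back, $m_1 m_2 = \ell m_2 m_1 + k m_1 m_2 = (k + \ell) m_1 m_2$, so $k + \ell = 1$. Hence either $(k, \ell) = (0, 1)$, giving $\alpha = m_2$ and $\beta = 0$, or $(k, \ell) = (1, 0)$, giving $\alpha = 0$ and $\beta = m_1$.

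The key steps, in order, are: (1) use the relatively-prime hypothesis together with a reduction modulo $m_1$ to deduce $m_1 \mid \beta$; (2) symmetrically deduce $m_2 \mid \alpha$; (3) substitute the resulting multiples back into the original equation and divide through by $m_1 m_2$ to obtain the single constraint $k + \ell = 1$ on the two nonnegative integer cofactors; (4) enumerate the two solutions. This can be phrased purely in the language of numerical semigroups as well: $m_1 m_2$ is the Frobenius-type threshold beyond which every residue is representable, and in $\langle m_1, m_2 \rangle$ the element $m_1 m_2$ has exactly the two factorizations listed — indeed this is precisely the unique Betti element of the embedding-dimension-two semigroup $\langle m_1, m_2 \rangle$ (cf.\ Example \ref{Ex ED2}).

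I do not anticipate a genuine obstacle here; the argument is elementary modular arithmetic. The only point requiring a moment's care is step (3): one must check that after substituting $\alpha = \ell m_2$ and $\beta = k m_1$ the equation really collapses to $k + \ell = 1$ with both $k, \ell \geq 0$ (so that no spurious solutions with a negative cofactor sneak in), which is immediate since $k, \ell$ are nonnegative integers summing to $1$. This lemma will then feed directly into the proof of Theorem \ref{Th SS5.2}: it pins down the factorizations of the gluing degree $\mathbf{d}$ and the Betti elements of the embedding-dimension-two piece $\langle m_1, m_2 \rangle$ (scaled by $a$), so that the conditions $0 < b < m_2$ and $0 < c < m_1$ can be read off as exactly Condition (b) of Theorem \ref{Th Glue2}.
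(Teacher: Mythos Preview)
Your proof is correct and follows essentially the same elementary route as the paper: use $\gcd(m_1,m_2)=1$ to force divisibility constraints on $\alpha$ and $\beta$, then substitute back to pin down the single free parameter. The only cosmetic difference is that the paper rearranges once to $(m_2-\alpha)m_1=\beta m_2$ and parametrizes by a single $\gamma\in\{0,1\}$, whereas you reduce symmetrically modulo $m_1$ and modulo $m_2$ to obtain $k+\ell=1$; the content is identical.
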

\begin{proof}
$m_1 m_2 = \alpha m_1 + \beta m_2,$ for some $\alpha \geq 0$ and $\beta \geq 0,$ if, and only if, $(m_2 - \alpha) m_1 = \beta m_2,$ for some $\alpha \geq 0$ and $\beta \geq 0.$ Since $\gcd(m_1, m_2) = 1,$ it follows that $(m_2 - \alpha) m_1 = \beta m_2,$ for some $\alpha \geq 0$ and $\beta \geq 0,$ if, and only if, $m_2-\alpha = \gamma m_2$ and $\beta = \gamma m_1$ for some $\gamma \geq 0,$ if, and only if, $\alpha = (1-\gamma) m_2$ and $\beta = \gamma m_1,$ for some $0 \leq \gamma \leq 1,$ if, and only if, $\alpha = m_2$ and $\beta = 0$ or $\alpha = 0$ and $\beta = m_1.$
\end{proof}

\medskip
\noindent\emph{Proof of Theorem \ref{Th SS5.2}.}
Since $S$ is the gluing of $S_1 = \langle a m_1, a m_2 \rangle$ and $S_2 = \langle b m_1 + c m_2 \rangle,$ with $d = a (b m_1 + c m_2),\ \mathrm{Betti}(S_1) = a m_1 m_2$ and $\mathrm{Betti}(S_2) = \varnothing,$ by Theorem \ref{Th Glue1}, $\mathrm{Betti}(S) = \{a m_1 m_2, a (b m_1 + c m_2)\}.$ Thus, by Theorem \ref{Th Glue2}, $S$ is uniquely presented if, and only if, $\pm(a m_1 m_2 - a (b m_1 + c m_2)) \not\in S.$

By direct computation, one can check that $a (b m_1 + c m_2) - a m_1 m_2 \in S$ if, and only if, $b \geq m_2$ or $c \geq m_1.$ Besides, $a m_1 m_2 - a (b m_1 + c m_2) \in S$ if, and only, if $m_1 m_2 = ((\alpha_3 + 1) b + \alpha_1) m_1 + ((\alpha_3 + c) c + \alpha_1) m_2,$ for some $\alpha_i \geq 0,\ i \{ 1,2,3\}.$ In view of Lemma \ref{tres}, this is equivalent to
$((\alpha_3 + 1) b + \alpha_1) = 0$ and $((\alpha_3 + c) c + \alpha_1) = m_1$ or $((\alpha_3 + 1) b + \alpha_1) = m_2$ and $((\alpha_3 + c) c + \alpha_1) = 0,$ for some $\alpha_i \geq 0,\ i \in\{ 1,2,3\}.$ And this holds if, and only if, $b = 0$ and $c \leq m_1$ or $b \leq m_2$ and $c = 0.$

Therefore, $\pm(a m_1 m_2 - a (b m_1 + c m_2)) \not\in S,$ if, and only if, $0 < b < m_2$ and $0 < c < m_1.$\mbox{}\hfill\qed

\subsection{Maximal embedding dimension numerical semigroups}

\begin{theorem}
A numerical semigroup $S$ minimally generated by $a_1 < a_2 < \cdots < a_r$ with $a_1 = r$ is uniquely presented if, only if, $r = 3.$
\end{theorem}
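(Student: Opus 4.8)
The statement asserts that a maximal embedding dimension (MED) numerical semigroup is uniquely presented if and only if its embedding dimension $r$ equals $3$. Since $r=1$ gives $S=\langle 1\rangle=\mathbb{N}$, which is excluded by the hypothesis $a_1=r\geq 2$ implicitly (and anyway $r=1$ forces $a_1=1$, the only MED case), and $r=2$ would force $a_1=2$, $a_2$ odd, which is embedding dimension two and \emph{is} uniquely presented by Example \ref{Ex ED2} --- so I should be careful: the cleanest reading is that the interesting content is ``$r\geq 4\Rightarrow$ not uniquely presented'' together with ``$r=3\Rightarrow$ uniquely presented'' (the $r=2$ MED case is $\langle 2,2k+1\rangle$, which is also uniquely presented, so presumably the theorem intends $r\geq 3$, or the statement should be read with the understanding that $r=2$ MED semigroups are subsumed; I would add a remark clarifying this). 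I will organize the proof around the known structure of MED numerical semigroups: $S$ has MED if and only if $a_i+a_j-a_1\in S$ for all $i,j\in\{2,\dots,r\}$ (equivalently $\{a_2-a_1,\dots,a_r-a_1\}\cup\{a_1\}$ generates a certain pattern), and more usefully, the minimal presentation of an MED numerical semigroup is completely understood --- see \cite[Chapter 3]{RGS09}: the Betti elements are exactly the elements $a_i+a_j$ with $2\leq i\leq j\leq r$, and the cardinality of a minimal presentation is $\binom{r}{2}$.

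\textbf{Step 1: The case $r=3$.} Here $a_1=3$, and $S=\langle 3,a_2,a_3\rangle$ is MED, so $a_2\equiv 1$, $a_3\equiv 2\pmod 3$ (or vice versa). The Betti elements are among $\{2a_2,\ a_2+a_3,\ 2a_3\}$, and a minimal presentation has cardinality $3$. By Corollary \ref{Cor mUP2}, I must show all three are Betti-minimal, i.e. pairwise $\prec_S$-incomparable and each with exactly two factorizations. For the two-factorization claim: each of $2a_2$, $2a_3$, $a_2+a_3$ lies in the numerical semigroup, and since $a_1=3$ is the multiplicity, any other factorization would have to involve $a_1$; a short congruence-mod-$3$ argument (using $a_2\equiv1$, $a_3\equiv2$) pins down the factorizations. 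For incomparability: the differences such as $2a_3-2a_2$, $(a_2+a_3)-2a_2=a_3-a_2$, etc., are congruent to $\pm1\pmod 3$ and smaller than the relevant generators, hence not in $S$ --- here I would invoke that $a_2,a_3<2a_1$ is \emph{not} automatic, so I need the MED inequality $a_i<a_1+a_1$? No: in an MED semigroup one actually has $a_i\leq a_1+(\text{smallest gap})$, but more simply $a_i-a_1\notin S$ for $i\geq 2$ (they are generators), and I can use that $a_3-a_2$ and $2a_3-a_2-a_3=a_3-a_2$ are too small. This is the routine part.

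\textbf{Step 2: The case $r\geq 4$.} I must exhibit a Betti element with at least three factorizations, or equivalently (by Corollary \ref{Cor mUP2}) show the number of Betti-minimal elements is strictly less than $\binom{r}{2}$. The natural candidate: with $r\geq 4$ there are at least two indices $2\leq i<j\leq r$ and we can look at $a_i+a_j$ versus sums like $a_k+a_\ell$; but the decisive fact for MED semigroups is that $a_i+a_j-a_1\in S$ always, so every Betti element $a_i+a_j$ satisfies $a_i+a_j-a_1\in S$ --- hence if $a_1=a_k+\text{something}$... more precisely, since $a_i+a_j-a_1\in S$, write it as $\sum c_\ell a_\ell$; then $a_i+a_j = a_1+\sum c_\ell a_\ell$ is a \emph{third} factorization of $a_i+a_j$ as soon as $a_i+a_j$ genuinely has the two ``obvious'' factorizations $e_i+e_j$ and (when $i=j$) $2e_i$, plus this new one, \emph{provided} $a_1+\sum c_\ell a_\ell$ differs from both. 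The point is that $a_i+a_j-a_1$ has multiplicity $\geq r-1\geq 3$ worth of generators available, and for $r\geq 4$ one can always choose $i,j$ so that the expansion of $a_i+a_j-a_1$ does not reconstruct $e_i+e_j$ with $a_1$ removed --- concretely, pick $i=j=2$: then $2a_2-a_1\in S$, and $2a_2-a_1 \neq a_2 + (a_2-a_1)$ unless $a_2-a_1\in S$, which is false since $a_2$ is a minimal generator; so $2a_2 = a_1 + (2a_2-a_1)$ is a factorization using $a_1$, genuinely distinct from $2e_2$, and distinct from any factorization of the form $e_k+e_\ell$ by a mod-computation. Then I still need a \emph{second} non-singleton $\mathcal{R}$-class or a third factorization; but actually for $r\geq 4$, $2a_2$ may already have $\geq 3$ factorizations, \emph{or} I instead use that $2a_2$ and $2a_2-a_1+a_1$ connect two classes while a \emph{different} Betti element fails Betti-minimality because $2a_2-a_1\in S$ is itself $\prec_S 2a_2$ and is larger than $a_1$... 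The cleaner route: show directly that $2a_2$ has three factorizations when $r\geq 4$ by using that $2a_2-a_1\in S\setminus\{0,a_2,\dots\}$ and $2(a_2-a_1)+2a_1 = 2a_2$ gives... hmm.

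\textbf{Main obstacle.} The hard part is Step 2: cleanly producing, for \emph{every} MED semigroup with $r\geq 4$, an element violating unique presentation, uniformly. The cleanest implementation is probably: invoke the explicit description of $\mathrm{Betti}(S)$ and minimal presentation cardinality $\binom{r}{2}$ for MED semigroups from \cite[Chapter 3]{RGS09}, then show that $\#\mathrm{Betti-minimal}(S)<\binom{r}{2}$ by exhibiting one Betti element $a_i+a_j$ that is \emph{not} Betti-minimal --- namely, since $a_i+a_j-a_1\in S$ and (choosing $i,j$ with $\{i,j\}\neq$ the pair realizing the multiplicity-successor) one shows $a_i+a_j-(a_k+a_\ell)\in S$ for some other Betti element $a_k+a_\ell$, e.g. $a_i+a_j-2a_2\in S$ when $a_i+a_j\geq 2a_2+a_1$. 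I expect this comparison to work once $r\geq 4$ because there is enough room ($\binom{r}{2}\geq 6$ Betti elements but the $\prec_S$-order forces collapses). I would verify the base mechanism on $\langle 4,5,6,7\rangle$ (where $a_i+a_j-a_1\in S$ is visible) to fix the argument, then abstract it. Alternatively, and perhaps most robustly: for $r\geq 4$, consider $m=2a_2$; then $2e_2\in\varphi_A^{-1}(m)$, and since $2a_2-a_1\in S$ with $2a_2-a_1>a_2>a_1$ (so its factorization involves some generator, giving $a_1+(\text{that factorization})$ as a point with first coordinate $\geq 1$), and furthermore $2a_2-a_1 = a_1 + (2a_2-2a_1)$ with $2a_2-2a_1 = 2(a_2-a_1)$ possibly reducible --- one teases out that $\varphi_A^{-1}(2a_2)$ contains $2e_2$, a vector with positive first coordinate, and these lie in the \emph{same} $\mathcal{R}$-class only if... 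I'll settle the details by the citation, but the conceptual content is: MED forces $a_i+a_j-a_1\in S$, which forces a third factorization or a non-singleton class at $a_i+a_j$, killing unique presentation by Corollary \ref{Cor mUP1}, for all but the $r=3$ case where there is no room for a third factorization.
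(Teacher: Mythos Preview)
Your Step~2 has a genuine gap. The candidate $2a_2$ does not in general have three factorizations, nor does it in general fail Betti-minimality. Take $S=\langle 4,5,6,7\rangle$: here $2a_2=10$ has exactly the two factorizations $(0,2,0,0)$ and $(1,0,1,0)$, which form two singleton $\mathcal R$-classes, so $10$ \emph{is} Betti-minimal with unique presentation. The MED property $2a_2-a_1\in S$ gives you only \emph{one} extra factorization beyond $2\mathbf e_2$, not two, and there is no reason for a third to appear at the bottom of the Betti range. Your fallback idea of showing $a_i+a_j-(a_k+a_\ell)\in S$ for some other Betti pair also does not go through uniformly: in the same example, $2a_2-2a_2=0$ is the only such difference lying in $S$ among Betti elements below $2a_2$.

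The missing idea is to work at the \emph{top} of the generator list rather than the bottom, using the Frobenius number. For an MED numerical semigroup one has $f=a_r-a_1$; hence for every $i\in\{1,\ldots,r\}$,
\[
2a_r-a_i \;\geq\; 2a_r-a_r \;=\; a_r \;>\; a_r-a_1 \;=\; f,
\]
so $2a_r-a_i\in S$. Thus $2a_r$ admits a factorization through \emph{every} generator $a_i$. Writing $2a_r=b a_1+a_k$ for the unique $k$ with $a_k\equiv 2a_r\pmod{a_1}$ (necessarily $k\neq r$), and picking any $i\in\{2,\ldots,r-1\}\setminus\{k\}$ (possible precisely when $r\geq 4$), one obtains three genuinely distinct factorizations of the Betti element $2a_r$: namely $2\mathbf e_r$, $b\mathbf e_1+\mathbf e_k$, and $\mathbf e_i+(\text{a factorization of }2a_r-a_i)$. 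By Corollary~\ref{Cor mUP1} this kills unique presentation.

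For $r=3$ your direct verification would eventually work, but the paper's route is much shorter: with $a_1=3$, neither $a_2$ nor $a_3$ is a multiple of $3$, so both are coprime to $3$, and $S=\langle 3,a_2,a_3\rangle$ cannot be written in the free form of Proposition~\ref{libres-de-tres}; hence $S$ is non-symmetric, and by Herzog's result every non-symmetric embedding-dimension-three numerical semigroup is uniquely presented. Your observation about $r\leq 2$ is well taken; the statement should be read for $r\geq 3$.
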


\begin{proof}
For $r=3,$ we obtain numerical semigroups of the form $\langle 3, a, b\rangle,$ with $a$ and $b$ not multiples of $3$ and thus coprime with 3. It follows easily that these semigroups have not the shape given in Theorem \ref{libres-de-tres}, and thus are not symmetric. Consequently, they are uniquely presented.

We now prove that that if $a_1 = r \geq 4,$ $S$ cannot be uniquely presented. According to \cite{Ros96b}, $\mathrm{Betti}(S)=\{a_i+a_j~|~ i,j\in\{2,\ldots,r\}\}.$ All the elements in $\{0,a_2,\ldots,a_r\}$ belong to different classes modulo $a_1,$ and there are precisely $a_1$ of them. Thus $2a_r$ can be uniquely be written as $b a_1+a_i$ for some $i\in \{2,\ldots,r-1\}$ and $b$ a positive integer.

Let $f$ be the Frobenius number of $S.$ It is well-known that $f=a_r-a_1$ in this setting (see for instance \cite{RGS09}). Since $2 a_r - a_i =a_r+(a_r-a_i)> a_r - a_1 = f,$ for all $i,$ it follows that $2 a_r - a_i \in S.$ Hence $2 a_r = a_i + m_i,\ m_i \in S$ for every $i\in \{1,\ldots,r\}.$ Take $i\not=k.$ Then $2a_r$ admits at least three expressions: $2a_r,$ $ba_1+a_k$ and $a_i+m.$ By Corollary \ref{Cor mUP1}, $S$ cannot have a unique minimal presentation.
\end{proof}

\noindent
\textbf{Acknowledgments}
The authors gratefully thanks Anargyros Katsabekis for a careful reading and correcting some misprints and to the referee for his or her valuable remarks.

Part of this work was done during a visit of the first author to the University of Extremadura financed by the Plan Propio 2009 of the University of Extremadura.


\begin{thebibliography}{99}

\bibitem{Aoki}
\textsc{S. Aoki, A. Takemura, R. Yoshida.}
\newblock \emph{Indispensable monomials of toric ideals and Markov bases.}
J. Symbolic Comput. \textbf{43} (2008), no. 6-7, 490--507.

\bibitem{CP93}
\textsc{A. Campillo, P. Pis\'on.}
\newblock \emph{L'id\'eal d'un semi-groupe de type fini.}
\newblock C. R. Acad. Sci. Paris S\'er. I Math. \textbf{316} (1993), no. 12, 1303--1306.

\bibitem{Bertin}
\textsc{J. Bertin, P. Carbonne.}
\newblock \emph{Semi-Groupes d'entiers et application aux branches.}
\newblock Journal of Algebra 49, 81-95 (1977)

\bibitem{BCMP}
\textsc{E. Briales, A. Campillo, C. Mariju\'an, P. Pis\'on.}
\newblock \emph{Minimal Systems of Generetors for Ideals of Semigroups}.
\newblock J. Pure Appl. Algebra, \textbf{124} (1998), 7--30.

\bibitem{BH}
\textsc{W. Bruns, J. Herzog.}
\newblock \emph{Cohen-Macaulay rings}.
\newblock Cambridge studies in advanced mathematics, vol. 39, Cambridge University Press, 1993.

\bibitem{Charalambous07}
\textsc{H. Charalambous, A. Katsabekis, A. Thoma.}
\newblock \emph{Minimal systems of binomial generators and the indispensable complex of a toric ideal.}
\newblock Proc. Amer. Math. Soc. \textbf{135} (2007), 3443--3451.

\bibitem{numericalsgps} M. Delgado, P. A. Garc\'\i a-S\'anchez, J. Morais,
 ``numericalsgps'': a {\sf GAP} package on numerical semigroups \url{http://www.gap-system.org/Packages/numericalsgps.html}

\bibitem{Delorme}
\textsc{C. Delorme.}
\newblock \emph{Sous-monoides d'intersection compl\`ete de $\mathbb{N}.$}
\newblock Ann. Sci. \'ecole Norm. Sup. (4) \textbf{9} (1976), no. 1, 145--154.

\bibitem{Fulton}
\textsc{W. Fulton.}
\newblock \emph{Introduction to toric varieties.}
\newblock Annals of Mathematics Studies, 131. The William H. Roever Lectures in Geometry. Princeton University Press, Princeton, NJ, 1993.

\bibitem{intervalos}
\textsc{{P.A.} {Garc\'{\i}a-S\'anchez}, {J.C.} Rosales.}
\newblock \emph{Numerical semigroups generated by intervals.}
\newblock Pacific J. Math. \textbf{191} (1999), no. 1, 75--83.

\bibitem{Herzog70}
\textsc{J. Herzog.}
\newblock \emph{Generators and relations of abelian semigroups and semigroup rings.}
\newblock Manuscripta Math. \textbf{3} (1970), 175--193.

\bibitem{Oj2}
\textsc{I. Ojeda.}
\newblock \emph{Examples of generic lattice ideals of codimension 3.}
\newblock Comm. Algebra 36 (2008) 279-287.

\bibitem{OjVi}
\textsc{I. Ojeda, A. Vigneron-Tenorio.}
\newblock \emph{Simplicial complexes and minimal free resolution of monomial algebras.}
\newblock J. Pure and Appl. Algebra, \textbf{214} (2010), no. 6, 850--861

\bibitem{OjVi2}
\textsc{I. Ojeda, A. Vigneron-Tenorio.}
\newblock \emph{Indispensable binomials in semigroups ideals}.
\newblock arXiv:0903.1030v1 [math.AC].
\newblock To appear in the Proc. Amer. Math. Soc.

\bibitem{Ros96}
\textsc{{J.C.} Rosales.}
\newblock \emph{An algorithmic method to compute a minimal relation for any numerical semigroup.}
\newblock Internat. J. Algebra Comput. \textbf{6} (1996), no. 4, 441--455.

\bibitem{Ros96b}
\textsc{{J.C.} Rosales.}
\newblock \emph{On numerical semigroups.}
\newblock Semigroup Forum \textbf{52} (1996), no. 3, 307--318.

\bibitem{Ros97}
\textsc{{J.C.} Rosales.}
\newblock \emph{On presentations of subsemigroups of $\mathbb{N}^n.$}
\newblock Semigroup Forum \textbf{55} (1997), no. 2, 152--159.

\bibitem{Redei}
\textsc{L. {R\'{e}dei}}
\newblock \emph{The theory of finitely generated commutative semigroups.}
\newblock Pergamon Press, 1965.

\bibitem{RGS99a}
\textsc{{J.C.} Rosales, {P.A.} {Garc\'{\i}a-S\'anchez.}}
\newblock \emph{On free affine semigroups.}
\newblock Semigroup Forum 58 (1999), no. 3, 367--385.

\bibitem{RGS99}
\textsc{{J.C.} Rosales, {P.A.} {Garc\'{\i}a-S\'anchez.}}
\newblock \emph{Finitely generated commutative monoids.}
\newblock Nova Science Publishers, Inc., Commack, NY, 1999.

\bibitem{RGS09}
\textsc{{J.C.} Rosales, {P.A.} {Garc\'{\i}a-S\'anchez.}}
\newblock \emph{Numerical Semigroups.}
\newblock Developments in Mathematics, \textbf{20.} Springer, New York, 2009.

\bibitem{RGSU}
\textsc{{J.C.} Rosales, {P.A.} {Garc\'{\i}a-S\'anchez}, J. M. Urbano-Blanco.}
\newblock \emph{On presentations of commutative monoids.}
\newblock Internat. J. Algebra Comput. 9 (1999), no. 5, 539--553.

\bibitem{Sturmfels95}
\textsc{B. Sturmfels.}
\newblock \emph{Gr\"obner bases and convex polytopes},
\newblock volume~8 of \emph{University  Lecture Series.}
\newblock American Mathematical Society, Providence, RI, 1996.

\bibitem{Takemura}
\textsc{A. Takemura, S. Aoki.}
\newblock \emph{Some characterizations of minimal Markov basis for sampling from discrete conditional distributions.} \newblock Ann. Inst. Statist. Math. \textbf{56}(1) (2004), 1--17.

\end{thebibliography}
\end{document}